\documentclass[12pt,a4paper]{amsart}
\usepackage{amssymb}
\usepackage{mathtools}
\usepackage[all]{xy}
\textwidth160mm
\oddsidemargin5mm
\evensidemargin5mm

\newcommand{\CC}{{\mathbb{C}}}

\newcommand{\FF}{{\mathbb{F}}}

\newcommand{\GG}{{\mathbb{G}}}

\newcommand{\NN}{{\mathbb{N}}}

\newcommand{\ZZ}{{\mathbb{Z}}}

\newcommand{\bm} {\mathbf m}

\newcommand{\bw} {\mathbf w}

\newcommand{\bG} {\mathbf G}

\newcommand{\cB} {\mathcal B}

\newcommand{\cF} {\mathcal F}
\newcommand{\cG} {\mathcal G}
\newcommand{\cH} {\mathcal H}

\newcommand{\cM} {\mathcal M}

\newcommand{\cP} {\mathcal P}



\newcommand{\Aut}{{{\operatorname{Aut}}}}
\newcommand{\Out}{{{\operatorname{Out}}}}

\newcommand{\Inn}{{{\operatorname{Inn}}}}
\newcommand{\Hom}{{{\operatorname{Hom}}}}

\newcommand{\Irr}{{{\operatorname{Irr}}}}

\newcommand{\GL}{\operatorname{GL}}

\newcommand{\Sp}{\operatorname{Sp}}

\newcommand{\PSL}{\operatorname{PSL}}
\newcommand{\SL}{\operatorname{SL}}

\newcommand{\Syl}{{{\operatorname{Syl}}}}

\newtheorem{thm}{Theorem}[section]
\newtheorem{lem}[thm]{Lemma}

\newtheorem{prop}[thm]{Proposition}
\newtheorem{conj}[thm]{Conjecture}

\theoremstyle{definition}

\newtheorem{defn}[thm]{Definition}

\raggedbottom

\begin{document}

\title{Weights for $\ell$-local compact groups}

\author{Jason Semeraro}

\begin{abstract}
In this note, we initiate the study of $\cF$-weights for an $\ell$-local compact group $\cF$ over a discrete $\ell$-toral group $S$ with discrete torus $T$. Motivated by Alperin's Weight Conjecture for simple groups of Lie-type, we conjecture that when $T$ is the unique maximal abelian subgroup of $S$ up to $\cF$-conjugacy and every element of $S$ is $\cF$-fused into $T$, the number of weights of $\cF$ is bounded above by the number of ordinary irreducible characters of its Weyl group. By combining the structure theory of $\cF$ with the theory of blocks with cyclic defect group, we are able to give a proof of this conjecture in the case when $\cF$ is simple and $|S:T| =\ell$. We also propose and give evidence for an analogue of the height zero case of Robinson's Ordinary Weight conjecture in this setting. 
\end{abstract}

\keywords{fusion systems, weight conjectures, blocks, $p$-local compact groups}

\subjclass[2010]{20C33, 20D20, 20D06, 55R35}

\date{\today}

\maketitle

\pagestyle{myheadings}

\section{Introduction}

An \textit{$\ell$-compact group} is, broadly speaking, the $\ell$-local homotopy theoretic analogue of a compact Lie group with a `Weyl group' $W$ which is a $\mathbb{Z}_\ell$-reflection group. In \cite{KMS1}, the authors introduce and prove a version of Alperin's Weight Conjecture (AWC) for fusion systems associated to homotopy fixed point spaces of connected $\ell$-compact groups under the action of unstable Adams operations. For such a fusion system $\cF$ associated to a simply connected $\ell$-compact group with Weyl group $W$, under some mild hypotheses this conjecture asserts that $\bw(\cF)=|\Irr(W)|$ where $\bw(\cF)$ is the number of \textit{weights} (defect zero characters of $\cF$-automorphism groups of $\cF$-centric radical subgroups up to $\cF$-conjugacy). One consequence is that the number of weights appears to be an invariant of the underlying $\ell$-compact group, as opposed to the space of fixed points. In an attempt to understand this, we introduce the study of weights for $\ell$-compact groups by appealing to the more general theory of \textit{$\ell$-local compact groups}  \cite{BLO3}. Here, we view an $\ell$-local compact group as a fusion system $\cF$ on an $\ell$-group $S$ which contains a finite index infinite torus $T$ (see Definition \ref{d:llcg}). Since $\cF$ has finitely many classes of $\cF$-centric radical subgroups, each with a finite $\cF$-outer automorphism group (see Proposition \ref{p:finite}), we obtain an integer invariant $\bw(\cF)$ which, by analogy with the finite case, we call the `number of weights' of $\cF$ (see Definition \ref{d:nrweights}). 

An $\ell$-local compact group $\cF$ is  \textit{connected} if $T$ is the unique maximal abelian subgroup of $S$ up to $\cF$-conjugacy and  every element of the underlying $\ell$-group is $\cF$-conjugate to an element of $T$. In this case we refer to the group $W=\Aut_\cF(T)$ as the \textit{Weyl group} of $\cF$. Based on \cite[Theorem 1]{KMS1} for finite fusion systems, and observations made in the present paper, we make the following conjecture which may be regarded as a weak analogue of Alperin's Weight Conjecture for connected $\ell$-local compact groups. 

\begin{conj}[Weak analogue of AWC]\label{c:main}
Let $\cF$ be a connected $\ell$-local compact group with Weyl group $W$. Then $$\bw(\cF) \le |\Irr(W)|.$$
\end{conj}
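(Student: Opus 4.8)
We sketch the plan of proof of Conjecture~\ref{c:main} in the case where $\cF$ is moreover simple and $|S:T|=\ell$; the general case remains open. Write $z(H)$ for the number of $\ell$-defect-zero irreducible characters of a finite group $H$, so that, by Definition~\ref{d:nrweights}, $\bw(\cF)=\sum_{[Q]}z(\Out_\cF(Q))$, a finite sum (Proposition~\ref{p:finite}) over the $\cF$-conjugacy classes of $\cF$-centric radical subgroups $Q$. Since $|S:T|=\ell$ is prime, conjugation identifies $S/T$ with a Sylow $\ell$-subgroup $W_\ell$ of $W=\Aut_\cF(T)=\Out_\cF(T)$; in particular $|W|_\ell=\ell$, so $W$ --- and, as will emerge, every $\Out_\cF(Q)$ --- has cyclic Sylow $\ell$-subgroups of order dividing $\ell$. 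The torus $T$ is $\cF$-centric, is $\cF$-radical precisely when $O_\ell(W)=1$, and (when radical) contributes $z(W)$ weights; moreover, since every positive-defect block of $W$ has defect group $W_\ell$ of order $\ell$ while a defect-zero block carries a single character, $|\Irr(W)|-z(W)=\sum_{B}k(B)$, with $k(B)$ the number of irreducible characters of $B$ and $B$ running over the positive-defect blocks of $W$ --- an identity valid whether or not $T$ is $\cF$-radical, since $O_\ell(W)\neq1$ forces $z(W)=0$. Hence the desired equality $\bw(\cF)=|\Irr(W)|$ is equivalent to
\[
\sum_{[Q]:\,Q\neq T}z(\Out_\cF(Q))\;=\;\sum_{B}k(B).
\]

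The plan is to prove this by matching the two sides block by block. First one enumerates the $\cF$-centric radical subgroups $Q\neq T$. If $T\leq Q$ then $Q=S$, because $|S:T|$ is prime; as $T$ is characteristic in $S$, restriction to $T$ gives a homomorphism $\Out_\cF(S)\to N_W(W_\ell)/W_\ell$ whose kernel --- using that $S$ is $\cF$-radical --- has order prime to $\ell$, so, the target being prime to $\ell$ as well, $|\Out_\cF(S)|$ is prime to $\ell$ and $z(\Out_\cF(S))=|\Irr(\Out_\cF(S))|$. If $T\not\leq Q$ then $QT/T$ is a nontrivial subgroup of the simple group $S/T$, whence $QT=S$ and $Q=(Q\cap T)\langle s\rangle$ with $s\in S\setminus T$ and $Q\cap T$ a proper $\langle s\rangle$-invariant subgroup of $T$. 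Appealing to the structure theory of $\ell$-local compact groups \cite{BLO3} together with the structural description of connected $\cF$ with $|S:T|=\ell$, one would classify these non-toral $Q$ up to $\cF$-conjugacy, check that each $\Out_\cF(Q)$ has cyclic Sylow $\ell$-subgroup of order $\ell$, and organise them by a map $Q\mapsto B(Q)$ to the positive-defect blocks of $W$, defined through Brauer's first main theorem applied to the $\ell$-subgroup $W_\ell$ of $W$ and the automizer data attached to $Q$.

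For each such $Q$ the count $z(\Out_\cF(Q))$ is then governed by Brauer's theory of blocks with cyclic defect group: writing $H=\Out_\cF(Q)$, every positive-defect block $b$ of $H$ has inertial index $e_b\mid\ell-1$ and a Brauer tree with $e_b$ edges and exceptional multiplicity $m_b=(\ell-1)/e_b$, so $k(b)=e_b+m_b$ and $z(H)=|\Irr(H)|-\sum_{b:\,d(b)=1}(e_b+m_b)$. The crux is the fibrewise identity
\[
\sum_{[Q]:\,B(Q)=B}z(\Out_\cF(Q))\;=\;k(B)\;=\;e_B+\frac{\ell-1}{e_B}
\]
for each positive-defect block $B$ of $W$: the cyclic group $\Out_\cF(S)$ accounts for the $e_B$ non-exceptional edges of the Brauer tree of $B$, while a further non-toral radical attached to the exceptional vertex accounts for $m_B$. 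Summing over $B$ and restoring the contribution $z(W)$ of $T$ yields $\bw(\cF)=|\Irr(W)|$. As a sanity check, for the $\ell$-local compact group modelled on $\operatorname{PU}(\ell)$ one has $W=\fS_\ell$, whose $\cF$-centric radicals are $T$, $S$ with $\Out_\cF(S)\cong\ZZ/(\ell-1)$, and an elementary abelian subgroup $V$ of rank $2$ with automizer $\SL_2(\ell)$; the contributions $z(\fS_\ell)$, $\ell-1$, $1$ of $T$, $S$, $V$ then sum to $|\Irr(\fS_\ell)|$, since $\fS_\ell$ has exactly $\ell$ irreducible characters in its unique positive-defect block.

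The principal obstacle lies in the middle step: extracting from the structure theory a description of the non-toral $\cF$-centric radical subgroups sharp enough to establish (i) that each has cyclic $\ell$-automizer, (ii) the correspondence $Q\mapsto B(Q)$, and (iii) the fibrewise identity --- precisely the point at which the local geometry of $\cF$ must be shown to recreate the Brauer-tree combinatorics of $W$. Further technical points include confirming that $S$ (and $T$, when $O_\ell(W)=1$) is $\cF$-radical, handling degenerate configurations such as $W\cong\ZZ/\ell$ by hand, and verifying that the structure theory indeed accounts for \emph{all} connected simple $\ell$-local compact groups with $|S:T|=\ell$, the exotic examples included, so that no family need be treated ad hoc.
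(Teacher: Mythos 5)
Your plan is in the right direction but the central ``fibrewise identity'' does not hold up as stated, and the route you propose is noticeably murkier than the one the paper actually takes. You posit a map $Q\mapsto B(Q)$ from non-toral $\cF$-centric radicals to positive-defect blocks of $W$ and a block-by-block identity $\sum_{[Q]:B(Q)=B}z(\Out_\cF(Q))=k(B)$; but you then claim that $\Out_\cF(S)$ should contribute the $e_B$ non-exceptional characters of \emph{each} block $B$, which means $S$ cannot be assigned to a single block $B(S)$ and the partition on which the identity rests is not well defined. The structure theory in fact forces something sharper and simpler: by Oliver--Ruiz (the paper's Theorems~\ref{t:or2} and~\ref{t:or1}, drawing on \cite{OR19}, not \cite{BLO3} as you cite), there is \emph{exactly one} $\cF$-class of $\cF$-centric radicals besides $T$ and $S$ (either $\cH$ or $\cB$), and the three relevant automizers all factor through one auxiliary $\ell'$-group $X$: $\Out_\cF(S)\cong C_{\ell-1}\times X$, $N_W(U)\cong(C_\ell\rtimes C_{\ell-1})X$, and $\Out_\cF(Q)\cong\SL_2(\ell)X$. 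The paper then sidesteps Brauer trees entirely. It applies the Feit bijection once, globally, to turn $|\Irr(W)|-z(kW)$ into $|\Irr(N_W(U))|$ (Lemma~\ref{l:thev}), and uses elementary Clifford theory (Lemma~\ref{l:chars}) to compute $|\Irr(N_W(U))|=\ell|\Irr(X)|$, $|\Irr(\Out_\cF(S))|=(\ell-1)|\Irr(X)|$, and $z(k\Out_\cF(Q))=|\Irr(X)|$, after which the required cancellation is immediate. Your PU$(\ell)$ sanity check is correct, and your observation that $\Out_\cF(S)$ is an $\ell'$-group is right, but to make your plan into a proof you would need to replace the ill-posed fibrewise matching with the single application of the cyclic-defect correspondence to $W$ and lean on the precise, uniform $X$-factorisation from \cite{OR19}, which is exactly what the paper does; also be aware that the case where $T$ itself fails to be $\cF$-radical is handled separately (Theorem~\ref{t:or2}), which you elide.
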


 Markus Linckelmann has asked for conditions on a fusion system $\cG$ which imply the existence of a subgroup $T$ with $\bw(\cG)=|\Irr(\Out_\cG(T))|$ so Conjecture \ref{c:main} would provide a partial answer to this.  Letting $v_\ell(-)$ denote the $\ell$-adic valuation, it is a straightforward observation that equality holds in Conjecture \ref{c:main} when $v_\ell(|W|)=0$. Indeed,  $\cF$ is automatically connected in this case (since $S=T$) and hence $T$ is the unique $\cF$-centric radical subgroup in $\cF$. Thus $\bw(\cF)$ is the number of defect zero characters of $W$, which is $|\Irr(W)|$ since $v_\ell(|W|)=0$. Our main result concerns the case $v_\ell(|W|)=1$.

\begin{thm}\label{t:main}
Suppose $\cF$ is a simple $\ell$-local compact group and $v_\ell(|W|)=1$. Then $\cF$ is connected and $$\bw(\cF)=|\Irr(W)|.$$
\end{thm}

Here, $\cF$ is \textit{simple} if it possesses no non-trivial normal subsystems. Theorem \ref{t:main} initially inspired the author to conjecture that an equality $\bw(\cF) = \Irr(W)$ \textit{always} holds for connected $\ell$-local compact groups, but this was later shown to be false in joint work with Kessar and Malle \cite{KMS3}. For example if $\cF=\cF_2(\Sp(2))$, then $4=\bw(\cF) \lneq |\Irr(D_8))|=5$ (see \cite[Example 4.2]{KMS3}). On the other hand, \cite[Theorem 1]{KMS3} shows that that $\bw(\cF)=|\Irr(W)|$ where $\cF=\cF_\ell(\bG)$ for \textit{any} compact connected Lie group $\bG$ for which the prime $\ell$ is good. Thus an equality-predicting refinement of Conjecture \ref{c:main} might be obtained from an appropriate generalisation of ``good'' to connected $\ell$-local compact groups; we do not pursue that here.

Next, we introduce a version of the height zero case of Robinson's Ordinary Weight Conjecture (OWC) for connected $\ell$-local compact groups. Via deep results in modular representation theory, Conjecture \ref{conj:amcmpt} below may be understood as a generalisation  of height zero OWC for Lie-type groups with Weyl group $W$ when $\ell$ is a very good prime and $q \equiv 1 \pmod \ell$. This connection is made formally at the end of Section \ref{sec:weights}.

\begin{conj}[Analogue of height zero OWC]\label{conj:amcmpt}
Let $\cF$ be a connected $\ell$-local compact group on $S$ with Weyl group $W$ and set $$\begin{array}{rcl} \cM(\cF) &=&\{(\psi,\chi) \mid \psi \in \Irr(S^{ab}), \chi \in \Irr(C_{\Out_\cF(S)}(\psi))\}/\sim_\cF \medskip \\ \cP(\cF) &=&\{(s,\chi) \mid s \in Z(S), \chi \in \Irr_0(W(s))\}/\sim_\cF\end{array}$$  where for each $s \in Z(S)$, $W(s)$ denotes the Weyl group of $C_\cF(s)$.  Then there is an $\Out(\cF)$-equivariant bijection $$\cM(\cF) \longleftrightarrow \cP(\cF).$$ 
\end{conj} 

Here $\Out(\cF)$ denotes the quotient $\Aut(\cF)/\Aut_\cF(S)$, where $\Aut(\cF) \le \Aut(S)$ is the group of $\cF$-fusion preserving automorphisms of $S$. The above equivalences $\sim_\cF$ are defined precisely in Section \ref{sec:weights}, as is the natural action of $\Out(\cF)$ on the equivalence classes $\cM(\cF)$ and $\cP(\cF)$. The conjectured existence of an $\Out(\cF)$-equivariant bijection is partially inspired by the \textit{inductive McKay condition } for simple groups due to Isaacs--Malle--Navarro \cite{IMN07}; indeed it seems likely that the natural map $\Out(G) \rightarrow \Out(\cF)$ (see \cite[Section 4]{AOV12}) can be used to exhibit a precise connection with that condition when $\cF$ is realisable by a simple group $G$, but we do not attempt that here. As with Conjecture \ref{c:main}, it is relatively straightforward to prove Conjecture \ref{conj:amcmpt} when $v_\ell(|W|)=0$ (see Proposition \ref{p:conj13ab}). Our second main result supplies some evidence for the case $v_\ell(|W|)=1$. We first recall the fact, due to Gonz\'{a}lez \cite[Theorem 1]{G16} that any $\ell$-local finite group $\cF$ on $S$ can be approximated as a direct limit of a sequence of (categorical) inclusions of saturated fusion systems on finite $\ell$-groups $$(\cF_1,S_1) \rightarrow (\cF_2,S_2) \rightarrow (\cF_3,S_3) \rightarrow \cdots  $$ with $S=\displaystyle\lim_{\substack{\longrightarrow \\ n \ge 1}} S_n$. We use this to establish a `truncated' version of 
Conjecture \ref{conj:amcmpt} in the case $v_\ell(|W|)=1$.

\begin{thm}\label{t:main2}
Let $\cF$ be a simple $\ell$-local compact group with Weyl group $W$ and assume $v_\ell(|W|)=1$. Then there is a sequence of fusion system inclusions $(\cF_n,S_n)_{n \ge 1}$ for which $\cF=\displaystyle\lim_{\substack{\longrightarrow \\ n \ge 1}} \cF_n$ where for each $n \ge 1$, $\cF_n$ is a finite fusion system on the  $\ell^n$-torsion subgroup  $S_n$ of $S$. Moreover, for each $n \ge 1$, there is a bijection $$\cM(\cF_n) \longleftrightarrow  \cP(\cF_n).$$ 
\end{thm}

We prove Theorems \ref{t:main} and \ref{t:main2} in Section \ref{s:main2}. The proofs only require  the character theory of finite groups with Sylow $\ell$-subgroups of order $\ell$ and particular information concerning the $\cF$-automorphisms of $\cF$-centric radical subgroups mostly established in \cite{OR19} (see Theorems \ref{t:or2} and \ref{t:or1}). The connectedness of $\cF$ is crucial to  ensuring that there are a sufficiently small number of classes of $\cF$-centric radical subgroups.

We expect Theorem \ref{t:main2} to follow from Conjecture \ref{conj:amcmpt}. In \cite{JLL12}, Junod, Levi and Libman  prove that for any $\ell$-local compact group $\cF$ with torus $T$ and any $\zeta \in \ZZ_\ell^\times$ with $v_\ell(\zeta-1)  >\!\!>0$, $\Aut(\cF)$ contains an \textit{unstable Adams operation} $\Psi=\Psi(\zeta)$ with the property that $\Psi|_T(g)=g^\zeta$ for each $g \in T$. Given such an operation $\Psi$, computations of Gonz\'{a}lez \cite{G12} seem to indicate that the homotopy fixed points of powers of $\Psi$ stratify $\cF$ as a direct limit of finite fusion systems $\{\cF_n \mid n \ge 1 \}$ where $\cF_n$ is the fusion subsystem of $\cF$ fixed by $\Psi^n$  (so $\cF=\displaystyle\lim_{\substack{\longrightarrow \\ n \ge 1}} \cF_n$). Thus the $\Aut(\cF)$-equivariance in  Conjecture \ref{conj:amcmpt} should imply the existence of bijections between sets fixed by $\Psi^n \in \Aut(\cF)$ as in Theorem \ref{t:main2}.

\subsection{Acknowledgements}
 The author would like to thank the Isaac Newton Institute for Mathematical Sciences, Cambridge, for support and hospitality during the programme ``Groups, representations and applications: new perspectives'' (supported by EPSRC grant EP/R014604/1.\#34) where work on this paper was undertaken. Thanks in particular are extended to Gunter Malle, Radha Kessar and Bob Oliver for helpful discussions. The author also gratefully acknowledges personal support from the Heilbronn Institute for Mathematical Research and the EPSRC (grant EP/W028794/1). Finally the author would like to thank the anonymous referee for carefully reading the paper and for making numerous suggestions which led to its improvement.

\section{$\ell$-local compact groups and weights}\label{sec:weights}
For the rest of the paper, $\ell$ is a prime and $k$ is an algebraically closed field of characteristic $\ell$. We refer the reader to \cite{AKO11} for basic notation and results in the theory of fusion systems.  We begin by introducing the class of $\ell$-groups on which we focus our attention.

\begin{defn}
Write $\ZZ/\ell^\infty$ for the infinite union of the cyclic $\ell$-groups $\ZZ/\ell^n$ under the obvious inclusions. A group isomorphic to a direct product of finitely many copies of $\ZZ/\ell^\infty$ is called a \textit{discrete $\ell$-torus}. \textit{A discrete $\ell$-toral group} is a group $S$ which contains a discrete $\ell$-torus $T$ as a normal subgroup of finite $\ell$-power index.  
\end{defn}

As observed in \cite[Lemma 1.3]{BLO3}, any quotient, subgroup or extension of an infinite discrete $\ell$-toral group $S$ is a discrete $\ell$-toral group and $S$ contains only finitely many conjugacy classes of subgroups of order $\ell^n$ for every $n \ge 0$ (\cite[Lemma 1.4]{BLO3}). By \cite[Lemma 1.9]{BLO3} there is an increasing sequence $S_1 \le S_2 \le S_3 \le \cdots$ of $\Aut(S)$-invariant $\ell^n$-torsion subgroups of $S$, whose direct limit is $S=\displaystyle\lim_{\substack{\longrightarrow \\ n \ge 1}} S_n$. When $S$ is abelian, the sequence $$\cdots \rightarrow \Hom(S_3,\CC^\times) \rightarrow \Hom(S_2,\CC^\times) \rightarrow \Hom(S_1,\CC^\times) \rightarrow \Hom(S_1,\CC^\times)$$ is $\Aut(S)$-invariant, so that $\Aut(S)$ acts naturally on the corresponding inverse limit $\Irr(S) = \displaystyle\lim_{\substack{\longleftarrow \\ n \ge 1}} \Hom(S_n,\CC^\times)$.

 A \textit{fusion system} $\cF$ over $S$ with discrete torus $T$ is defined by analogy with the finite case \cite[Definition 2.1]{BLO3}. What it means for $\cF$ to be \textit{saturated} is almost analogous, provided one makes an additional assumption that morphisms behave well with respect to filtrations of subgroups (see \cite[Definition 2.2 (III)]{BLO3}). 

\begin{defn}\label{d:llcg}
Let $S$ be a discrete $\ell$-toral group with discrete torus $T$. An \textit{$\ell$-local compact group} is a saturated fusion system on $S$. $\cF$ is, in addition, \textit{connected} if $T$ is the unique maximal abelian subgroup of $S$ up to $\cF$-conjugacy and every element of $S$ is $\cF$-conjugate to an element of $T$. In this case we refer to the group $\Aut_\cF(T)$ as the \textit{Weyl group} of $\cF$.
\end{defn}

Note that the definition of \textit{connected} here differs slightly to that found in \cite[Definition 3.1.4]{G10}. The definitions of \textit{fully $\cF$-centralised}, \textit{fully $\cF$-normalised}, \textit{$\cF$-centric} and \textit{$\cF$-radical} are unchanged from those for fusion systems on finite groups. We write $\cF^{cr}$ for the set of $\cF$-centric radical subgroups. As in \cite[Definition 1.4]{OR19}, we say that $\cF$ is \textit{simple} if it possesses no non-trivial normal subsystems. Using a certain ``bullet construction'' first considered by Benson (see \cite[Definition 3.1]{BLO3}) it is possible to prove the following result.

\begin{prop}\label{p:finite}
Let $\cF$ be an $\ell$-local compact group on $S$. Then $\Out_\cF(P)$ is finite for all $P \le S$. Moreover, $\cF$ has only finitely many conjugacy classes of $\cF$-centric, $\cF$-radical subgroups.
\end{prop}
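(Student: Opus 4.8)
The plan is to reduce to the known finite case via a filtration argument combined with the bullet construction. First I would recall the ``bullet'' functor $(-)^\bullet$ of Benson and Broto--Levi--Oliver: for a saturated fusion system $\cF$ over a discrete $\ell$-toral group $S$, one fixes $n$ large enough (depending only on the structure of $S$ and the action of $\Aut_\cF(T)$ on $T$) so that every $\cF$-centric subgroup $P$ is $\cF$-conjugate to one containing the characteristic finite subgroup $\coT[n]$ (the subgroup of $\ell^n$-torsion of $T$, roughly), and more precisely every $\cF$-centric radical subgroup lies in the finite set of subgroups containing this fixed finite group and contained in a fixed finite subgroup $S^\bullet$ of $S$. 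This immediately gives that there are only finitely many $S$-conjugacy classes of such subgroups, hence only finitely many $\cF$-conjugacy classes.

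For the finiteness of $\Out_\cF(P)$: the point is that $\Aut_\cF(P)$ preserves the filtration of $P$ by the subgroups $P \cap \coT[m]$ (the torsion layers), so there is a homomorphism $\Aut_\cF(P) \to \prod_m \Aut(P \cap \coT[m]/P\cap\coT[m-1])$; the saturation axiom (III) of \cite{BLO3} ensures this is injective on the subgroup of $\Aut_\cF(P)$ acting trivially on $P/P^\bullet$ for suitable $P^\bullet$, and since $P^\bullet$ is finite and $P/P^\bullet$ is a finite $\ell$-group, one concludes $\Aut_\cF(P)$ is finite. An alternative and perhaps cleaner route: first treat $P = T$ directly, where $\Aut_\cF(T)$ is finite because it is a subgroup of $\Aut(T) = GL_r(\ZZ_\ell)$ that stabilizes each finite layer $T[n]$ and acts faithfully on a fixed finite layer by the saturation axiom, hence embeds in a finite group $GL_r(\ZZ/\ell^n)$; then bootstrap to general $P$ using that $P \cap T$ has finite index in $P$ and $\Aut_\cF(P)$ normalizes $P \cap T$.

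The main obstacle I expect is making the choice of bounding integer $n$ uniform and correctly invoking the extra clause in the saturation definition (the behaviour of morphisms on unions/filtrations), since without it an automorphism could act trivially on every finite layer yet nontrivially on $T$ — this is exactly what axiom (III) of \cite[Definition 2.2]{BLO3} rules out, and it is the crux of why $\Out_\cF$ is finite rather than merely profinite. Once the bullet construction pins every $\cF$-centric radical subgroup inside a single finite subgroup $S^\bullet \le S$ up to $\cF$-conjugacy, the finiteness of the number of classes is formal. So in summary: (1) set up $(-)^\bullet$ and the finite subgroup $S^\bullet$; (2) show $\cF^{cr}$-subgroups are $\cF$-conjugate into $S^\bullet$, giving finitely many classes; (3) show $\Aut_\cF(P)$ is finite by combining the layer filtration with saturation axiom (III); (4) deduce $\Out_\cF(P)$ finite for all $P \le S$ since any $P$ embeds its automorphisms into those of its $\bullet$-closure.
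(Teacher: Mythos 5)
The paper's own ``proof'' is simply a pointer to \cite[Proposition 2.3 and Corollary 3.5]{BLO3}, so what is being assessed is whether your sketch faithfully reconstructs that argument. Your instinct to use the bullet construction, the torsion filtration of $T$, and saturation axiom~(III) is exactly right --- those are the tools BLO3 uses --- but some of the load-bearing statements in your plan are false, and they point to the bullet operator being remembered backwards.

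Concretely: you assert that every $\cF$-centric radical subgroup is ``contained in a fixed finite subgroup $S^\bullet$ of $S$,'' and later that ``$P^\bullet$ is finite and $P/P^\bullet$ is a finite $\ell$-group.'' Neither can be right. An $\cF$-centric subgroup $P$ of an infinite discrete $\ell$-toral group $S$ satisfies $C_T(P)\le P$, and since $C_T(P)$ contains the $T$-fixed points of the finite $\ell$-group $P/(P\cap T)$ it is infinite in the situations of interest, so $P$ is infinite and cannot lie in any finite subgroup. Moreover the bullet operator \emph{enlarges} subgroups: $P\le P^\bullet$, so $P/P^\bullet$ is not even defined, and $P^\bullet$ is infinite whenever $P$ is. The actual BLO3 argument for finitely many classes is that every $\cF$-centric radical subgroup is bullet-closed ($P=P^\bullet$), that restriction $\Aut_\cF(P^\bullet)\to\Aut_\cF(P)$ is an isomorphism, and that a bullet-closed $P$ is determined by a bounded amount of data (a finite-index subgroup of $T$ together with finite coset information), of which there are finitely many $S$-conjugacy types --- not that everything is trapped inside one finite subgroup. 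Your filtration/axiom-(III) idea for the finiteness of $\Out_\cF(P)$ is sound in spirit, but as written it leans on the same mistaken picture of $P^\bullet$ being finite. The roadmap is salvageable, but the containment direction and the role of $(-)^\bullet$ need to be corrected against \cite{BLO3} before this is a proof.
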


\begin{proof}
See \cite[Proposition 2.3 and Corollary 3.5]{BLO3}.
\end{proof}

In particular the Weyl group of $\cF$ is finite. We are interested in the following invariant.

\begin{defn}\label{d:nrweights}
Let $\cF$ be an $\ell$-local compact group on $S$. The \textit{number of weights} $\bw(\cF)$ of $\cF$ is given by: $$\bw(\cF):=\sum_{P \in \cF^{cr}/\cF} z(k\Out_\cF(P)),$$ where the sum runs over a set of $\cF$-conjugacy class representatives of $\cF$-centric, $\cF$-radical subgroups and for a group $H$, $z(kH)$ denotes the number of projective simple $kH$-modules up to isomorphism.
\end{defn}

By Proposition \ref{p:finite}, $\bw(\cF)$ is finite. For the remainder of this section we assume that $\cF$ be a \textit{connected} $\ell$-local compact group on $S$.

\begin{defn}\label{def:ordinaryweights}
The set $\cM(\cF)$ of \textit{ordinary weights of height $0$} is the set of equivalence classes $$\cM(\cF)=\{(\psi,\chi) \mid \psi \in \Irr(S^{ab}), \chi \in \Irr(C_{\Out_\cF(S)}(\psi)) \}/\sim_\cF ,$$  where $(\psi, \chi) \sim_\cF (\psi',\chi')$ if there is $\rho \in \Aut_\cF(S)$ with $\psi^\rho=\psi'$ and $\chi^{\rho^{-1}}=\chi'$; we write $[(\psi,\chi)]$ for the class of $(\psi,\chi)$. 
\end{defn}

Recall that an element $\phi \in \Aut(S)$ is an \textit{automorphism of $\cF$} if it induces an autofunctor $\Phi: \cF \rightarrow \cF$ where:
\begin{enumerate}
\item for each $P \in$ Ob$(\cF)$, $\Phi(P)=\phi(P)$;
\item for each $\psi \in \Hom_\cF(P,Q)$, $$\Phi(\psi) = \phi|_{Q} \circ \psi \circ (\phi|_{\phi(P)})^{-1} : \phi(P) \rightarrow \phi(Q)$$ is an element of $\Hom_\cF(\phi(P),\phi(Q))$.
\end{enumerate}

 Write $\Aut(\cF) \le \Aut(S)$ for the set of all automorphisms of $S$ which induce automorphisms of $\cF$. Since $\Aut_\cF(S) \unlhd \Aut(\cF)$, we define $\Out(\cF):=\Aut(\cF)/\Aut_\cF(S)$ to be the  \textit{outer automorphism group of $\cF$} and denote by $[\phi] \in \Out(\cF)$ the class  of an element $\phi$ of $\Aut(\cF)$. If $s \in Z(S)$ then $s$ is fully $\cF$-centralised, so $C_\cF(s)$ is also a saturated fusion system on $S$ and we write $W(s):=C_W(s)$. 
 
 \begin{lem}\label{l:cent}
For any $s \in Z(S)$ and $\phi \in \Aut(\cF)$, the induced autofunctor $\Phi$ restricts to an isomorphism $C_\cF(s) \rightarrow C_\cF(\phi(s))$ which sends $W(s)$ to $W(\phi(s))$.
 \end{lem}

\begin{proof}
Let $P \le S$, and $\psi \in \Hom_{C_\cF(s)}(P,S)$, then by definition of $C_\cF(s)$, $\psi$ extends to a map $\overline{\psi} \in \Hom_\cF(P\langle s \rangle,S)$ which fixes $s$; hence $$\phi \circ \overline{\psi} \circ \phi^{-1} \in \Hom_\cF(\phi(P\langle s \rangle),S) = \Hom_\cF(\phi(P) \langle \phi(s) \rangle,S)$$ and this latter map clearly fixes $\phi(s)$ and  restricts to  $\phi \circ \psi \circ \phi^{-1} \in \Hom_\cF(\phi(P),S)$. We deduce that $\phi \circ \psi \circ \phi^{-1} \in \Hom_{C_\cF(\phi(s))}(\phi(P),S)$. Moreover, since $\Phi$ fixes $T$, $\phi|_T \circ  W \circ (\phi|_T)^{-1} = W$ and hence $\phi|_T  W(s) (\phi|_T)^{-1} = W(\phi(s))$.
\end{proof}

\begin{defn}
 The set $\cP(\cF)$ is the set of equivalence classes $$\cP(\cF):=\{(s,\chi) \mid s \in Z(S), \chi \in \Irr_0(W(s))\}/\sim_\cF,$$ where $(s,\chi) \sim_\cF (s',\chi')$ if and only if there is $\phi \in \Aut_\cF(S)$ with $s'=\phi(s)$ and $\chi'=\chi^{\Phi^{-1}|_{W(\phi(s))}}$. 
\end{defn}

\begin{prop}\label{p:nataction}
$\Out(\cF)$ acts on $\cP(\cF)$ via $$[\phi] \cdot [(s,\chi)] = [(\phi(s), \chi^{\Phi^{-1}|_{W(\phi(s))}})],$$ for each $[\phi] \in \Out(\cF)$ and class $[(s,\chi)] \in \cP(\cF)$. \end{prop}

\begin{proof}
It suffices to check the action of $[\phi]$ on $\chi$ is independent of the choice of representative $\phi$. If $\phi' \in \Aut(\cF)$ is such that $[\phi']=[\phi]$ then $\gamma \circ \phi'=\phi$ for some $\gamma \in \Aut_\cF(S)$ so $$[\phi'] \cdot [(s,\chi)] = [(\phi'(s), \chi^{\Phi'^{-1}})] \sim_\cF [(\phi(s), \chi^{\Phi^{-1}})]$$ since  $(\gamma \circ \phi')(s)=\phi(s)$ and $\chi^{\Phi'^{-1}\Gamma^{-1}}=\chi^{\Phi^{-1}}$ where $\Gamma$ is the functor $C_\cF(\phi'(s)) \rightarrow C_\cF(\phi(s))$ induced by $\gamma$.
\end{proof}

Similarly, $\Out(\cF)$ acts on $\cM(\cF)$ via $[\phi] \cdot [(\psi,\chi)] = [(\psi^\phi, \chi^{\phi^{-1}}]$ for each $[\phi] \in \Out(\cF)$ and class $[(\psi,\chi)] \in \cM(\cF)$ and thus the statement of Conjecture \ref{conj:amcmpt} makes sense. In particular we can prove:

\begin{prop}\label{p:conj13ab}
Conjecture \ref{conj:amcmpt} holds if $S$ is abelian.
\end{prop}

\begin{proof}
Since $|W|$ is coprime with $\ell$, by the Glauberman correspondence, there exists an infinite family of $W$-equivariant bijections $S_n \rightarrow \Irr(S_n)$ which, by the remarks following Definition \ref{d:llcg}, induces a $W$-equivariant bijection $S \rightarrow \Irr(S)$. The result follows immediately from this.
\end{proof}

We conclude this section by explaining the sense in which Conjecture \ref{conj:amcmpt} may be regarded as an analogue of OWC for height zero characters. Let $B$ be the principal $\ell$-block of a finite group $G$, $S \in \Syl_\ell(G)$ be its defect group and $\cF=\cF_S(G)$ be its fusion system. The maximal defect case of OWC for $B$ (see  \cite[Section 2]{KLLS19}) predicts an equality \begin{equation}\label{e:mfd}
\bm(\cF,d)=\Irr^d(B),\end{equation} for $d=v_\ell(|S|)$ where $\Irr^d(B)$ denotes the set of characters of $\ell$-defect $d$ in $B$. We have \begin{equation}\label{e:mfd2} \bm(\cF,v_\ell(|S|))=\sum_{\chi \in \Irr_0(S)/\Out_\cF(S)} |\Irr(C_{\Out_\cF(S)}(\chi))|=|\Irr(S^{ab}:\Out_\cF(S))|= |\cM(\cF)|,\end{equation} where the last equality follows from Lemma \ref{l:molg}. If $G$ is of Lie-type we have the following result.

\begin{lem}\label{l:goltmot}
Suppose that $B=B_0(G)$ is the principal $\ell$-block of $G=\bG^F$ with defect group $S$, where $\bG$ is a semisimple algebraic group defined over $\FF_q$ (for which $\ell$ is very good) with respect to a Frobenius endomorphism $F: \bG \rightarrow \bG$ and let $\cF$ be the $\ell$-fusion system of $G$ on $S$. If $q \equiv 1 \pmod \ell$ then $|\Irr^d(B)|=|\cP(\cF)|$.
\end{lem}

\begin{proof}
The proof of \cite[Proposition 6.8]{KMS1} shows that there is a degree-preserving bijection between $\Irr(B_0(G))$ and the set of irreducible characters in the principal $\ell$-block $B_0$ of the associated $\ZZ_\ell$-spets $\GG(q)$ (see \cite[Definition 6.7]{KMS1}). The number of characters of defect $d$ in the latter set is computed in \cite[Proposition 6.11]{KMS1}. In the notation of that result, we see that for each $s \in S$, $u_s=0$ by \cite[Proposition 5.7]{KMS1} and $W(s)_{\phi_s^{-1}\zeta^{-1}}=W(s)$ since $q \equiv 1 \pmod \ell$. Since maximal defect characters only occur when $s \in Z(S)$, the result follows.
\end{proof}

Thus, in the situation of Lemma \ref{l:goltmot}, by (\ref{e:mfd2}) we see that height zero OWC for $B$ is exactly the equality $|\cM(\cF)|=|\cP(\cF)|$.

\section{Some character theory}\label{s:prelim}

We need an elementary consequence of the character theory of groups with a Sylow $\ell$-subgroup of order $\ell$.

\begin{lem}\label{l:thev}
Let $W$ be a finite group with a cyclic Sylow $\ell$-subgroup $U$ of order $\ell$. Then $$|\Irr(W)|-z(kW)=|\Irr(N_W(U))|=|\Irr_0(N_W(U))| = |\Irr_0(W)|.$$
\end{lem}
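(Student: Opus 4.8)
The plan is to reduce everything to the classical theory of blocks with cyclic defect group applied to the principal $\ell$-block $B_0$ of $kW$. Since $U \in \Syl_\ell(W)$ is cyclic of order $\ell$, every non-principal block of $kW$ has defect zero (as $U$ has no proper nontrivial subgroups that could serve as a defect group, and a block of nonzero defect would have a defect group conjugate to $U$ — meaning it lies in the principal block by Brauer's first main theorem together with the fact that $N_W(U)/C_W(U)$ acts and $W$ has a normal $\ell$-complement modulo the relevant piece... more simply: blocks with full defect group $U$ all have the same inertial quotient, but there is essentially a unique such block, the principal one, when $U$ is self-centralizing up to the relevant control; in any case I would instead argue directly via counting). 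So I would split $\Irr(W)$ and $\IBr(W)$ into the principal block and the defect-zero blocks: $z(kW) = z(kB_0) + (\text{number of defect-zero blocks})$, and each defect-zero block contributes exactly one ordinary and one modular irreducible character, which moreover lies in $\Irr_0(W)$.

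Next I would invoke Brauer's theory of blocks with cyclic defect group for $B_0$: with defect group of order $\ell$, the Brauer tree of $B_0$ has $e := |N_W(U) : C_W(U)|$ edges (the number of simple $kB_0$-modules, so $z(kB_0) = e$, assuming no exceptional vertex or incorporating it), and $e+1$ vertices when there is no exceptional multiplicity... more precisely $|\Irr(B_0)| = e + (\text{exceptional multiplicity})$, where the exceptional multiplicity is $(\ell-1)/e$. Hence $|\Irr(B_0)| - z(kB_0) = (\ell-1)/e$. On the other side, $N_W(U)$ has a normal Sylow $\ell$-subgroup $U$ with $N_W(U)/U \cong$ its $\ell'$-part, so $|\Irr(N_W(U))| = |\Irr_0(N_W(U))|$ (all characters have height zero) and counting characters trivial and nontrivial on $U$ gives $|\Irr(N_W(U))| = |\Irr(N_W(U)/U)| + (\text{number of nontrivial } U\text{-characters fixed appropriately})$; a clean way is $|\Irr(N_W(U))| = |\Irr(C_W(U))| \cdot 1 + \dots$ — but the slickest identity is that $N_W(U)$ is $\ell$-nilpotent (since $U$ is abelian and normal in $N_W(U)$... actually $N_W(U) = U \rtimes$ nothing in general), so I would just use that the principal block of $N_W(U)$ has $|\Irr_0| = e \cdot (\ell-1)/e + e = $ ... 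Let me instead record the known Brauer-correspondence fact: $|\Irr(N_W(U))| = (\ell-1)/e \cdot e + (\text{characters of } N_W(U)/U) $, and since $|\Irr(N_W(U)/U)|$ equals the number of defect-zero-on-$U$ pieces, the count comes out to $(\ell - 1)/e \cdot e + z(kN_W(U))$, matching; and the defect-zero blocks of $kW$ are in bijection (via Brauer correspondence / the fact that $N_W(U) = N_W(D)$ for $D$ a defect group) with... I would streamline all of this.

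Concretely, the four-term chain I would establish is: (1) $|\Irr(W)| - z(kW) = |\Irr(B_0)| - z(kB_0)$, because non-principal blocks contribute equally (one each) to both $|\Irr(W)|$ and $z(kW)$; (2) $|\Irr(B_0)| - z(kB_0) = (\ell-1)/e$ from the Brauer-tree structure of a cyclic-defect block of defect $1$ (the exceptional multiplicity); (3) $|\Irr(N_W(U))| = |\Irr_0(N_W(U))|$ because $N_W(U)$ has a normal Sylow $\ell$-subgroup so every irreducible character has height zero; (4) $|\Irr_0(W)| = |\Irr(N_W(U))|$ and this common value equals $(\ell-1)/e + z(kW)$... wait — I should double check the bookkeeping. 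Since $|\Irr_0(W)| = |\Irr_0(B_0)| + (\text{defect-zero blocks})$ and $|\Irr_0(B_0)| = (\ell-1)/e$ is the exceptional-vertex count plus... no: in a cyclic-defect block of defect $1$, $\Irr_0(B_0)$ consists of the $e$ "non-exceptional" characters together with... actually the height-zero characters of $B_0$ are exactly the $e$ characters at the ends — hmm, all $e + (\ell-1)/e$ ordinary characters of a defect-$1$ block have height $0$ since the defect group has order $\ell$ and there is no room for positive height. So $|\Irr_0(B_0)| = |\Irr(B_0)| = e + (\ell-1)/e$, giving $|\Irr_0(W)| = e + (\ell-1)/e + (\#\text{def-}0\text{ blocks})$. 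Meanwhile $|\Irr(N_W(U))|$: its principal block also has cyclic defect $U$ of order $\ell$ with the same inertial index $e$, so $|\Irr_0| = e + (\ell-1)/e$ for that block, and by the Brauer correspondence the non-principal blocks of $W$ biject with non-principal blocks of $N_W(U)$ (both sides being defect-zero blocks with the "same" $N_W(U)$), contributing one each; so $|\Irr(N_W(U))| = e + (\ell-1)/e + (\#\text{def-}0\text{ blocks})$ as well. That gives the two middle equalities, and combined with (1)+(2) and $z(kW) = e + (\#\text{def-}0\text{ blocks})$ we get $|\Irr(W)| - z(kW) = (\ell-1)/e = |\Irr(N_W(U))| - z(kW)$... no. I would just verify: $|\Irr(W)| - z(kW) = |\Irr(B_0)| - z(kB_0) = (e + (\ell-1)/e) - e = (\ell-1)/e$, and $|\Irr_0(W)| = |\Irr(N_W(U))| = |\Irr_0(N_W(U))|$ all equal $(\ell-1)/e$ once I realize the defect-zero contributions cancel — wait they don't cancel on the right-hand sides since those are not differences. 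I think the actual statement forces $\#\text{def-}0\text{ blocks} = 0$ or the right-hand quantities are also "relative" — let me re-read: the Lemma claims $|\Irr(W)| - z(kW) = |\Irr(N_W(U))|$. For $W$ with cyclic Sylow of order $\ell$: e.g. $W = C_\ell$, then $|\Irr| = \ell$, $z(kW) = 1$, difference $= \ell - 1$; $N_W(U) = W$, $|\Irr(N_W(U))| = \ell \ne \ell - 1$. So I've misread — the identity must have $z$ on the right too, or $W$ must be $\ell'$-by-something specific. Re-reading once more: yes it literally says $|\Irr(W)| - z(kW) = |\Irr(N_W(U))|$; for $W = C_\ell$ this reads $\ell - 1 = \ell$, false. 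So there must be an implicit hypothesis (e.g. $O_{\ell'}(W) = 1$ and $W$ not $\ell$-nilpotent, so $N_W(U) < W$) that I should locate in the paper's conventions, or $z$ counts something else. Given the intended application I will assume the appropriate nondegeneracy (namely that the right-hand $N_W(U)$ is proper / that one subtracts the trivial block), and then the proof is exactly the Brauer-tree bookkeeping above: match non-principal (defect-zero) blocks of $W$ with those of $N_W(U)$ one-to-one, reduce to principal blocks, and use that a defect-$1$ block has $|\Irr| - z = (\ell-1)/e$ with the same $e$ on both sides, the remaining $|\Irr(N_W(U))|$-minus-$z$ discrepancy being absorbed by the $\ell$-nilpotency of $N_W(U)$ which forces $|\Irr_0(N_W(U))| = |\Irr(N_W(U))|$ and $z(kN_W(U)) = |\Irr(C_W(U)/[\text{stuff}])|$ — i.e., the last equality $|\Irr(N_W(U))| = |\Irr_0(W)|$ is the hard step and follows from comparing height-zero characters block-by-block via the Brauer correspondence.

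The main obstacle, then, is establishing $|\Irr_0(W)| = |\Irr(N_W(U))|$ rigorously: this is essentially the Alperin--McKay conjecture for blocks with cyclic defect group (a theorem of Dade), and I would cite it, noting that for cyclic defect one has a perfect bijection $\Irr_0(B) \leftrightarrow \Irr_0(b)$ where $b$ is the Brauer correspondent in $N_W(U)$, summed over all blocks $B$; combined with $\Irr_0(N_W(U)) = \Irr(N_W(U))$ (from $U \trianglelefteq N_W(U)$) this gives the claim. The other equalities — $|\Irr(W)| - z(kW) = |\Irr_0(N_W(U))|$ and $|\Irr_0(N_W(U))| = |\Irr_0(W)|$ — are then either immediate or follow from the same cyclic-defect package (Brauer's original work), so the single substantive input is Dade's cyclic-defect form of Alperin--McKay, which I will invoke rather than reprove.
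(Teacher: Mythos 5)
Your argument derails early because of a bookkeeping slip that you then misread as evidence that the lemma is false. You sanity-checked $|\Irr(W)|-z(kW)=|\Irr(N_W(U))|$ at $W=C_\ell$, computed $z(kC_\ell)=1$, and concluded the equality read $\ell-1=\ell$. But $z(kC_\ell)=0$: the trivial module is the unique simple $kC_\ell$-module and it is not projective (its projective cover is $kC_\ell$ itself, of dimension $\ell$); equivalently, $z(kH)$ counts defect-zero blocks, and $kC_\ell$ has none. With the correct value, $W=C_\ell$ gives $\ell=|\Irr(N_W(U))|$, consistent with the lemma, and the spiral of hedges about ``implicit hypotheses'' and ``$z$ counting something else'' in the second half of your writeup is chasing a phantom. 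There is also an earlier slip: you claim that every non-principal block of $kW$ has defect zero because a positive-defect block would have defect group $U$ and hence ``lie in the principal block.'' That inference is false; e.g.\ $W=C_\ell\times C_m$ with $\ell\nmid m$ has $m$ blocks, all with defect group $C_\ell$, only one of them principal. You hedge that you would ``argue directly via counting,'' but the argument as written never recovers.

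The correct argument is the one you gestured at before getting lost, and it matches the paper's. Since $|W|_\ell=\ell$, every block of $W$ has defect group trivial or conjugate to $U$, so $|\Irr(W)|-z(kW)=\sum_{D(B)=U}|\Irr(B)|$. Brauer's first main theorem bijects the defect-$U$ blocks of $W$ with those of $N_W(U)$, and the cyclic-defect theory (this is what the paper cites from Feit, VII.2.12) shows corresponding blocks share the inertial index $e$ and hence have $|\Irr(B)|=e+(\ell-1)/e=|\Irr(b)|$. Every block of $N_W(U)$ has defect $U$, so the right-hand sum is all of $|\Irr(N_W(U))|$, giving the first equality. The remaining two equalities are elementary and do not require Dade's Alperin--McKay theorem: since $|W|_\ell=\ell$, a character $\chi\in\Irr(W)$ has $\ell\mid\chi(1)$ precisely when it lies in a defect-zero block (as $\chi(1)_\ell\in\{1,\ell\}$ and the latter is the defect-zero condition), so $|\Irr_0(W)|=|\Irr(W)|-z(kW)$; and It\^o's theorem applied to the normal abelian Sylow subgroup $U\unlhd N_W(U)$ forces every character of $N_W(U)$ to have degree dividing the $\ell'$-number $|N_W(U):U|$, so $\Irr_0(N_W(U))=\Irr(N_W(U))$.
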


\begin{proof}
This follows from the existence of a bijection between blocks of $W$ and $N_W(U)$  with non-trivial defect which preserves the number of irreducible characters (see \cite[VII.2.12]{F82}). Hence summing over all such blocks we obtain, $$|\Irr(W)|-z(kW)=|\Irr(N_W(U))|-z(kN_W(U))=|\Irr(N_W(U))|,$$ since $z(kN_W(U))=0$, as required.

\end{proof}

The following is sometimes referred to as the ``Method of Little Groups.''

\begin{lem}\label{l:molg}
Let $H \unlhd G$ be finite groups and $\Theta:=\Irr(H)/G$ be a set of $G$-orbit representatives for $\Irr(H)$. Suppose that each $\theta \in \Irr(H)$ extends to a character $\widehat{\theta} \in \Irr(I_G(\theta))$, where $I_G(\theta)$ denotes the inertia subgroup of $\theta$ in $G$. Then there is a bijection  $$\{(\theta,\beta) \mid \theta \in \Theta, \beta \in \Irr(I_G(\theta)/H)\} \longrightarrow \Irr(G), \mbox{ given by } (\theta,\beta) \mapsto (\hat{\theta}\beta)_{I_G(\theta)}^G.$$
\end{lem}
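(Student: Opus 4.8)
The plan is to deduce the statement from three standard pieces of Clifford theory, applied one $G$-orbit at a time. Throughout, fix for each $\theta \in \Theta$ an extension $\widehat\theta \in \Irr(I_G(\theta))$ as provided by the hypothesis, and abbreviate $I = I_G(\theta)$. First I would record the form of Clifford's theorem that organises $\Irr(G)$: every $\chi \in \Irr(G)$ restricts to $H$ as a nonzero multiple of the sum over a single $G$-orbit in $\Irr(H)$, so if $\Irr(G \mid \theta)$ denotes the set of $\chi \in \Irr(G)$ having $\theta$ as a constituent of $\chi|_H$, then $\Irr(G) = \coprod_{\theta \in \Theta} \Irr(G \mid \theta)$, the union being disjoint because a character lying in $\Irr(G \mid \theta) \cap \Irr(G \mid \theta')$ would force $\theta$ and $\theta'$ into a common $G$-orbit, hence $\theta = \theta'$ as elements of $\Theta$.

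Next, for a fixed $\theta$ I would invoke the Clifford correspondence (Isaacs, \emph{Character Theory of Finite Groups}, Thm.~6.11): induction $\psi \mapsto \psi^G$ is a bijection from $\Irr(I \mid \theta)$ onto $\Irr(G \mid \theta)$. In particular, any irreducible character of $I$ lying over $\theta$ induces irreducibly to $G$; this already shows the proposed assignment $(\theta,\beta) \mapsto (\widehat\theta\beta)^G_{I_G(\theta)}$ lands in $\Irr(G)$, once we know $\widehat\theta\beta$ lies over $\theta$.

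The remaining input is Gallagher's theorem (Isaacs, Cor.~6.17): since $\theta$ is $I$-invariant and extends to $\widehat\theta$, multiplication $\beta \mapsto \widehat\theta\beta$ --- with $\beta \in \Irr(I/H)$ inflated to $I$ --- is a bijection from $\Irr(I/H)$ onto $\Irr(I \mid \theta)$ (here one uses that $(\widehat\theta\beta)|_H = \beta(1)\theta$, and that by Clifford's theorem inside $I$ every element of $\Irr(I \mid \theta)$ restricts to a multiple of $\theta$). Composing the two bijections of the previous two steps gives, for each fixed $\theta \in \Theta$, a bijection $\Irr(I_G(\theta)/H) \to \Irr(G \mid \theta)$ sending $\beta$ to $(\widehat\theta\beta)^G_{I_G(\theta)}$; taking the disjoint union over $\theta \in \Theta$ and combining with the decomposition of the first step produces the asserted bijection onto $\Irr(G)$.

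There is no serious obstacle here --- the lemma is a repackaging of classical results --- so the only point requiring care is the bookkeeping: that the construction depends on the once-and-for-all choice of the extensions $\widehat\theta$, and that well-definedness, injectivity and surjectivity of the global map reduce, via the disjointness established in the first step, to the corresponding statements for a single orbit, which are precisely what the Clifford correspondence and Gallagher's theorem supply.
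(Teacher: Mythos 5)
Your proof is correct and follows the standard route: the Clifford decomposition of $\Irr(G)$ into the disjoint pieces $\Irr(G\mid\theta)$ over $\theta\in\Theta$, the Clifford correspondence giving the bijection $\Irr(I_G(\theta)\mid\theta)\to\Irr(G\mid\theta)$ by induction, and Gallagher's theorem giving the bijection $\Irr(I_G(\theta)/H)\to\Irr(I_G(\theta)\mid\theta)$ via multiplication by the chosen extension. The paper itself gives no argument and simply cites Curtis--Reiner, Theorem~11.5; what you have written is essentially the proof that reference supplies, so there is nothing to reconcile.
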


\begin{proof}
See, for example, \cite[Theorem 11.5]{CR90}.
\end{proof}

The next lemma concerning characters of the extension of a normal subgroup by an $\ell'$-group is well-known.

\begin{lem}\label{l:klls}
Let $G$ be a finite group with $N \unlhd G$. Suppose that $V$ is an inertial projective simple $kN$-module and that $G/N$ is a cyclic $\ell'$-group. Then $G$ has exactly $|G:N|$ projective simple modules lying over $V$. In particular if $N=\SL_2(\ell) \le G \le \GL_2(\ell)$, then $z(kG)=|G:\SL_2(\ell)|$.
\end{lem}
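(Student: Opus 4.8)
The plan is to prove the two assertions in turn, starting from the general statement and then specialising to the case $N = \SL_2(\ell)$. For the general statement, I would work with a primitive idempotent $e$ of $Z(kN)$ associated to $V$ (so $e$ is the block idempotent of the block containing $V$, which is a block of defect zero since $V$ is projective simple). The hypothesis that $V$ is $G$-inertial (i.e. $g$-invariant up to isomorphism for all $g \in G$, equivalently $e$ is $G$-stable) means $e$ is a central idempotent of $kG$, so $kGe$ is a block of $kG$ whose simple modules are exactly the projective simple $kG$-modules lying over $V$. Because $G/N$ is a cyclic $\ell'$-group, I would then invoke the standard Clifford-theoretic structure of $kGe$: the block $kGe$ is, up to Morita equivalence, a twisted group algebra of $G/N$ over $k$ (or more concretely $\operatorname{End}_{kN}(V) \otimes$ a twisted group algebra of the $\ell'$-group $G/N$), and since $k$ is algebraically closed the Schur multiplier obstruction vanishes over an $\ell'$-group, so this twisted group algebra is just $k[G/N]$. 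As $G/N$ is abelian of order prime to $\ell$, $k[G/N]$ is semisimple with exactly $|G:N|$ simple modules, all projective, giving $|G:N|$ projective simple $kG$-modules over $V$.

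Alternatively — and this may be cleaner to write — I would run the argument through ordinary character theory via Lemma~\ref{l:molg} applied to a suitable extension, or simply cite the known result this lemma is attributed to in the literature on blocks with normal subgroups (e.g. the theory of blocks covering a block, or \cite[Theorem~11.5]{CR90}-type extension results combined with the fact that a defect-zero block of $N$ extended by an $\ell'$-group stays semisimple). Concretely: lift to characteristic zero, use that a projective simple $kN$-module is the reduction of an irreducible $\CC N$-character $\theta$ of defect zero, and that $\theta$ being $G$-inertial plus $G/N$ cyclic forces $\theta$ to extend to $I_G(\theta) = G$ (extension along a cyclic quotient always exists when one can solve the corresponding unit equation, which one can since the relevant cohomological obstruction lives in $H^2$ of a cyclic group with coefficients in $\CC^\times$, hence vanishes); then Lemma~\ref{l:molg} gives exactly $|\Irr(G/N)| = |G:N|$ irreducible characters over $\theta$, each of which has defect zero (the defect of $\hat\theta\beta$ equals that of $\theta$ since $\beta$ inflates from the $\ell'$-group $G/N$), hence reduces to a projective simple $kG$-module, and distinct ones reduce to non-isomorphic modules.

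For the final sentence, specialise to $N = \SL_2(\ell) \unlhd G \le \GL_2(\ell)$. Here the Steinberg module $\operatorname{St}$ is the unique projective simple $k\SL_2(\ell)$-module (it is the unique block of defect zero, as $\SL_2(\ell)$ has cyclic Sylow $\ell$-subgroups of order $\ell$ and $\operatorname{St}$ has dimension $\ell$), so $z(k\,\SL_2(\ell)) = 1$ and every $kG$-module lies over $\operatorname{St}$. Since $\operatorname{St}$ is stabilised by all of $\GL_2(\ell)$ (for instance because it is the unique simple module of its dimension, or because it extends to $\GL_2(\ell)$), it is $G$-inertial, and $G/\SL_2(\ell)$ embeds in $\GL_2(\ell)/\SL_2(\ell) \cong \FF_\ell^\times$, which is cyclic of order $\ell - 1$, hence a cyclic $\ell'$-group. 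Applying the first part gives $z(kG) = |G : \SL_2(\ell)|$.

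The main obstacle is the first part: one must be careful to justify that the $G$-inertial hypothesis together with $G/N$ being cyclic of order prime to $\ell$ really does produce an honest extension (equivalently, that the relevant twisted group algebra is untwisted), and to verify that the resulting simple $kG$-modules are genuinely \emph{projective} and \emph{pairwise non-isomorphic}. Both points hinge on the vanishing of a second cohomology obstruction over a cyclic $\ell'$-group with coefficients in the units of an algebraically closed field; once that is in place the count is immediate. I would present this either via the block-algebra-as-twisted-group-algebra route or, more economically, by quoting the appropriate extension theorem and invoking Lemma~\ref{l:molg}, so that the write-up stays short.
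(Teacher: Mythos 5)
Your argument is correct and supplies a full proof of a statement the paper itself only cites (\cite[Lemma 4.12]{KLLS19}). Both of your routes --- viewing $kGe$ as a crossed product of a defect-zero block by the cyclic $\ell'$-group $G/N$ and using that $H^2(G/N,k^\times)=0$ to untwist, or lifting to a defect-zero ordinary character, extending along the cyclic quotient, and counting via Lemma~\ref{l:molg} --- are the standard Clifford-theoretic arguments underlying the cited result, and your specialisation to $\SL_2(\ell)\le G\le \GL_2(\ell)$ (Steinberg is the unique defect-zero simple of $k\SL_2(\ell)$, it is $\GL_2(\ell)$-stable, and every projective simple $kG$-module lies over it) is also correct.
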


\begin{proof}
See, for example, \cite[Lemma 4.12]{KLLS19}.
\end{proof}

The following technical result is used in our calculation of $\bw(\cF)$ in Section \ref{s:main}.

\begin{lem}\label{l:chars}
Let $X_1$ be a finite $\ell'$-group and $X_2$ be a finite group and suppose $H$ is a normal subgroup of $X_1 \times X_2$ with $(X_1 \times X_2)/H$ cyclic of order $e \mid \ell-1$. The following hold:
\begin{itemize}
\item[(1)] If $X_2 = \GL_2(\ell)$ and $O^{\ell'}(H)=\SL_2(\ell)$ then $z(kO^{\ell'}(H)X_1)=|\Irr(X_1)|\cdot (\ell-1)/e$. 
\item[(2)] If $X_2 = N_{\GL_2(\ell)}(U)$ for some Sylow $\ell$-subgroup $U$ of $\SL_2(\ell)$ and $O^{\ell'}(H) = U$ then $|\Irr(H)|=|\Irr(X_1)|\cdot \ell \cdot (\ell-1)/e$.
\end{itemize}
\end{lem}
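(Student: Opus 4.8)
The plan is to reduce both parts to the lemmas already collected in this section, exploiting the product structure of $X_1\times X_2$. Since $X_1$ is an $\ell'$-group, the projective simple $kX_1$-modules are exactly all the simple $kX_1$-modules, and these biject with $\Irr(X_1)$; moreover every simple $k(X_1\times X_2)$-module is an outer tensor product $M\boxtimes N$ with $M$ simple $kX_1$ and $N$ simple $kX_2$, and such a module is projective iff $N$ is projective over $kX_2$ (the $\ell'$-factor contributes no obstruction). So the first step is to set up the dictionary between modules for $X_1\times X_2$, modules for $H$, and Clifford theory along the cyclic $\ell'$-quotient $(X_1\times X_2)/H$ of order $e\mid \ell-1$.

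For part (1), with $X_2=\GL_2(\ell)$ and $O^{\ell'}(H)=\SL_2(\ell)$: first I would compute $z(k(X_1\times X_2))$. By the tensor decomposition this equals $|\Irr(X_1)|\cdot z(k\GL_2(\ell))$, and Lemma \ref{l:klls} (applied with $N=\SL_2(\ell)\le \GL_2(\ell)$, or rather its Steinberg module, which is inertial since $\SL_2(\ell)\unlhd \GL_2(\ell)$ with cyclic $\ell'$-quotient of order $\ell-1$) gives $z(k\GL_2(\ell))=\ell-1$. Hence $z(k(X_1\times X_2))=|\Irr(X_1)|\cdot(\ell-1)$. Now $H\unlhd X_1\times X_2$ with cyclic $\ell'$-quotient of order $e$, and $H$ contains $\SL_2(\ell)=O^{\ell'}(X_1\times X_2)$-part, so the (unique up to the $X_1$-twist) relevant projective simple module restricts irreducibly from $X_1\times X_2$ to $H$ — one checks inertiality using that $O^{\ell'}(H)=\SL_2(\ell)$ forces the Steinberg-type module to be stable. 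Then Lemma \ref{l:klls} applied to $H\le X_1\times X_2$ shows each projective simple $kH$-module has exactly $e=|(X_1\times X_2):H|$ projective simple modules of $X_1\times X_2$ lying over it, so $z(kH)=z(k(X_1\times X_2))/e=|\Irr(X_1)|\cdot(\ell-1)/e$, as claimed.

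For part (2), with $X_2=N_{\GL_2(\ell)}(U)$ and $O^{\ell'}(H)=U$: here I would compute ordinary character counts rather than projective ones. The group $N:=N_{\GL_2(\ell)}(U)$ is $U\rtimes C$ with $C$ cyclic of order $(\ell-1)^2$ (the diagonal torus of $\GL_2$ normalising the upper unitriangular $U$) — or at any rate $|N|=\ell(\ell-1)^2$ and $N/U$ is abelian — so $|\Irr(N)|$ is computed directly: $N$ has $[N:U]=(\ell-1)^2$ linear characters plus the characters of degree coming from inducing nontrivial characters of $U$; counting gives $|\Irr(N)|=(\ell-1)^2+(\ell-1)$, i.e. $\ell(\ell-1)$. (Alternatively, since $\ell\|\,|N|$, apply Lemma \ref{l:thev} to $N$ itself: $N_N(U)=N$, so that lemma is vacuous here; instead just count directly.) Then $|\Irr(X_1\times X_2)|=|\Irr(X_1)|\cdot|\Irr(N)|=|\Irr(X_1)|\cdot\ell(\ell-1)$, and Clifford theory along the cyclic quotient $(X_1\times X_2)/H$ of order $e$ — noting $O^{\ell'}(H)=U$ means $H\supseteq U$ and $H$ meets the complement $X_1\times C$ in an index-$e$ subgroup of the $\ell'$-part — shows that restriction from $X_1\times X_2$ to $H$ multiplies the number of irreducible characters by exactly $e$ in the appropriate sense (each $\Irr(X_1\times X_2)$-character restricts to a sum over an orbit, and since the quotient is an $\ell'$-group acting on $\Irr(H)$ with the relation $|\Irr(X_1\times X_2)|\cdot e=|\Irr(H)|$ — the "$\times e$" form of Clifford/Gallagher over a cyclic $\ell'$-quotient, dual to Lemma \ref{l:molg}), giving $|\Irr(H)|=|\Irr(X_1)|\cdot\ell(\ell-1)/e$.

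The main obstacle I anticipate is the bookkeeping in the Clifford-theory step — verifying that the hypotheses of Lemma \ref{l:klls} (inertiality of the relevant module under $H$) and of Lemma \ref{l:molg} (extendibility of characters to inertia subgroups) genuinely hold here. The key point making this work is that the quotient $(X_1\times X_2)/H$ is cyclic of order dividing $\ell-1$, hence an $\ell'$-group, and the obstruction to extending a character or a projective module across a cyclic quotient vanishes; one must identify which normal subgroup $H$ is (equivalently, which index-$e$ subgroup of the $\ell'$-quotient it picks out) and check that the module/character being tracked is $X_1\times X_2$-stable, which follows from the stated conditions $O^{\ell'}(H)=\SL_2(\ell)$ resp. $O^{\ell'}(H)=U$ together with the fact that $X_1$ contributes only an $\ell'$-twist that is absorbed by the orbit count. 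Once that is pinned down, everything reduces to the three lemmas above plus the explicit character count for $N_{\GL_2(\ell)}(U)$.
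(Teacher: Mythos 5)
Your approach --- compute the relevant count for $X_1\times X_2$ and then divide by $e$ via Clifford theory along the cyclic $\ell'$-quotient --- is essentially the route the paper takes (the paper's ``alternative'' argument does exactly this, via Lemma~\ref{l:molg}). However, two points deserve attention.

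First, the inertiality step in part~(1) is not actually justified by your remark that ``$O^{\ell'}(H)=\SL_2(\ell)$ forces the Steinberg-type module to be stable.'' The stated hypotheses alone do not force every projective simple $kH$-module to be $(X_1\times X_2)$-stable: take $\ell=3$, $X_1=D_8$, $X_2=\GL_2(3)$ and $H=C_4\times\GL_2(3)$, where $C_4\unlhd D_8$ is the rotation subgroup; then $H\unlhd X_1\times X_2$ with cyclic quotient of order $e=2$ and $O^{3'}(H)=\SL_2(3)$, yet $z(kH)=4\cdot 2=8$ while $|\Irr(D_8)|(\ell-1)/e=5$. What makes the lemma correct in all its applications is the extra feature, visible in Lemma~\ref{l:outfp}, that $X_1\times\SL_2(\ell)\le H$; with that, all projective simple $kN$-modules for $N=X_1\times\SL_2(\ell)$ are $H$-stable and the paper's one-line appeal to Lemma~\ref{l:klls} (taking $N=X_1\times\SL_2(\ell)$ and $G=H$, so that $G/N$ is cyclic of order $(\ell-1)/e$) goes through cleanly. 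The paper's alternative argument --- asserting without proof that every element of $\Irr(H)$ extends to $X_1\times X_2$ --- has the same unstated hypothesis, so this is not a defect unique to your write-up; still, you should flag it rather than claim it follows from $O^{\ell'}(H)=\SL_2(\ell)$.

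Second, in part~(2) you write the Clifford relation backwards: you state $|\Irr(X_1\times X_2)|\cdot e=|\Irr(H)|$ and say restriction ``multiplies the number of irreducible characters by $e$,'' but the correct relation (once stability is in place) is $|\Irr(X_1\times X_2)|=|\Irr(H)|\cdot e$. Your final formula $|\Irr(H)|=|\Irr(X_1)|\cdot\ell(\ell-1)/e$ is nevertheless the right one, as is your direct count $|\Irr(N_{\GL_2(\ell)}(U))|=\ell(\ell-1)$.
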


\begin{proof}
Suppose the hypotheses of (1) hold. Since $X_1$ is an $\ell'$-group,  $z(k(X_1 \times X_2))=|\Irr(X_1)|\cdot (\ell-1)$ by Lemma \ref{l:klls}. On the other hand, Lemma \ref{l:klls} applied to $N=O^{\ell'}(H)X_1$ and $G=X_1 \times X_2$ yields $z(k(X_1 \times X_2))=z(kO^{\ell'}(H)X_1)\cdot e$ and (1) follows from this. Part (2) is proved similarly: in this case $X_1 \times X_2 \cong X_1 \times (C_\ell \rtimes (C_{\ell-1})^2)$, and this group has $|\Irr(X_1)|\ell(\ell-1)$ irreducible characters. Now Lemma \ref{l:molg} implies that $|\Irr(X_1)| \ell(\ell-1)=|\Irr(H)|e$, as needed.
\end{proof}

\section{Fusion systems on $\ell$-groups with an abelian maximal subgroup}\label{s:main}

In this section we assume that $\cF$ is a simple $\ell$-local compact group on $S$ with discrete torus $T$ of index $\ell$. We adopt the  notation of \cite{OR19} to describe the structure of $\cF$. Set $$\cH=\{Z(S)\langle x \rangle \mid x \in S \backslash T\} \mbox{ and } \cB=\{Z_2(S)\langle x \rangle \mid x \in S \backslash T\},$$ where $Z(S)$ and $Z_2(S)$ denote the first and second centres of $S$ respectively. We have the following result from \cite{OR19} when $T$ is not $\cF$-centric radical.

\begin{thm}\label{t:or2}
Assume that $T$ is not $\cF$-centric radical. Then $\cF$ is connected, $T$ has rank $\ell-2$, $\cF^{cr}=\{S\} \cup \cH$ and for any prime $q \neq \ell$, $\cF$ is isomorphic to the unique subsystem of the $\ell$-fusion system of $\Gamma=\PSL_\ell(\overline{\mathbb{F}_q})$ with
\begin{itemize}
\item[(1)] $\Out_\cF(S)=\Out_\Gamma(S) \cong C_{\ell-1}$;
\item[(2)] $\Aut_\cF(T)=\Aut_{N_\Gamma(S)}(T) \cong C_\ell \rtimes C_{\ell-1}$, where $C_{\ell-1}$ is acting faithfully; and 
\item[(3)] $\Aut_\cF(P)=\Aut_{\Gamma}(P) \cong \SL_2(\ell)$, for $P \in \cH$.
\end{itemize}
\end{thm}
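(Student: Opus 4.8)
The plan is to leverage the structure theory of $\cF$ as developed in \cite{OR19}, specialised to the situation where the maximal torus $T$ is not $\cF$-centric radical. First I would recall from \cite{OR19} the classification of simple fusion systems on discrete $\ell$-toral groups $S$ with $|S:T|=\ell$: such a system is built from a finite $\ell$-torsion data, and the key dichotomy is whether or not $T \in \cF^{cr}$. Assuming $T \notin \cF^{cr}$, the first step is to pin down the $\ell$-local structure of $S$ itself. Since $S/T$ has order $\ell$, an element $x \in S\backslash T$ acts on $T=(\ZZ/\ell^\infty)^r$, and saturation forces this action to be (up to conjugacy) by a regular unipotent element, which is only possible if $r=\ell-2$; this is where the rank restriction $\rk(T)=\ell-2$ comes from, via an eigenvalue/Jordan-block count for the order-$\ell$ automorphism on the torus. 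I would then identify $Z(S)$ and $Z_2(S)$ explicitly in terms of this unipotent action, so that the families $\cH$ and $\cB$ are understood concretely.

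Next I would determine $\cF^{cr}$. By Proposition \ref{p:finite} there are only finitely many classes, and the candidates are: $S$ itself, subgroups in $\cH$ (i.e.\ $Z(S)\langle x\rangle$), subgroups in $\cB$, $T$, and possibly smaller subgroups. Using the standard criteria ($\cF$-centric plus $\Out_\cF(P)$ has no nontrivial normal $\ell$-subgroup), together with the hypothesis $T\notin\cF^{cr}$, one rules out everything except $\{S\}\cup\cH$: the subgroups in $\cB$ and the smaller subgroups either fail to be centric or have a normal $\ell$-subgroup in their outer automorphism group, and the only $\cF$-radical torus-like subgroup would have been $T$, which is excluded. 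For $P\in\cH$, the subgroup $P\cong Z(S)\times C_\ell$ has $C_S(P)=P$ (centric) and one shows $\Out_\cF(P)$ must involve $\SL_2(\ell)$ — here the point is that $P/Z(S)$ and the relevant Frattini quotient give a $2$-dimensional $\FF_\ell$-module on which $\Aut_\cF(P)$ acts, and radicality plus saturation pin the automizer down to $\SL_2(\ell)$ (no larger, since the determinant/scalar part would have to be accounted for by $\Aut_\cF(S)$, and no smaller, by $\cF$-radicality). This simultaneously establishes (3).

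For (1) and (2): $\Aut_\cF(S)$ is an $\ell'$-group (as $S$ is its own Sylow and $\cF$ saturated with $S$ radical only if $\Out_\cF(S)$ is $\ell'$), acting faithfully on $S^{ab}$; tracing through the unipotent action shows $\Aut_\cF(S)\cong C_{\ell-1}$, matching $\Aut_\Gamma(S)$ for $\Gamma=\PSL_\ell(\overline{\FF_q})$. Then $\Aut_\cF(T)$ normalises $\Aut_\cF(S)|_T$ and contains the order-$\ell$ unipotent automorphism coming from $x$, forcing $\Aut_\cF(T)\cong C_\ell\rtimes C_{\ell-1}$, again as in $N_\Gamma(S)$. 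Connectedness of $\cF$ then follows: every element of $S\backslash T$ lies in some $P\in\cH$, and $\Aut_\cF(P)\cong\SL_2(\ell)$ acts transitively enough on the nontrivial elements modulo $Z(S)$ to fuse each such element into $T$; elements of $T$ are already in $T$. Finally, to get the explicit isomorphism with the subsystem of the $\ell$-fusion system of $\PSL_\ell(\overline{\FF_q})$, I would invoke that all the local data (the subgroups $S,T$, the families $\cH,\cB$, and the automizers computed above) match those of $\Gamma$, and then apply the relevant uniqueness/rigidity statement from \cite{OR19} (every saturated fusion system with this prescribed $\cF^{cr}$-data is unique up to isomorphism), which reduces the claim to checking that $\Gamma$'s $\ell$-fusion system does contain such a subsystem — a direct computation in $\GL_\ell$.

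\medskip

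The main obstacle I expect is the verification of $\cF^{cr}=\{S\}\cup\cH$ — specifically, showing that no subgroup in $\cB$ (or intermediate between the torsion layers of $T$ and $S$) is $\cF$-radical. This requires a careful analysis of $\Aut_\cF(Q)$ for $Q\in\cB$ and of how it interacts with the automizer of the overgroup $P\in\cH$ containing it; the risk is that radicality could fail or hold depending on subtle features of the unipotent action and the $C_{\ell-1}$-action, so this step needs the full force of the structural results of \cite{OR19} rather than a soft argument. The rank computation $\rk(T)=\ell-2$ and the identification of the automizers are comparatively routine once the unipotent picture is in place.
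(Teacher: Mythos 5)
Your proposal is essentially the approach of the paper: all the structural facts (the rank $\ell-2$ of $T$, the determination $\cF^{cr}=\{S\}\cup\cH$, the automizers in (1)--(3), and the identification with the unique subsystem of the $\ell$-fusion system of $\PSL_\ell(\overline{\FF_q})$) are simply extracted from \cite[Theorem 5.12]{OR19}, and the only thing verified afresh is connectedness, which you handle exactly as the paper does, by observing that for $x\in S\setminus T$ the automizer $\Aut_\cF(P)\cong\SL_2(\ell)$ of $P=Z(S)\langle x\rangle\in\cH$ fuses $x$ into $T$ (in fact into $Z(S)$). One small caveat: your motivating sketch for $\rk(T)=\ell-2$ — that a regular unipotent action of order $\ell$ on $T$ is ``only possible if $r=\ell-2$'' — is not correct as stated, since a regular unipotent of order $\ell$ acts on a torus of any rank $r\le\ell$; the precise value $\ell-2$ comes from the interaction of simplicity, non-radicality of $T$, and saturation worked out in \cite{OR19}, and is not visible from a bare Jordan-block count. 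Since your plan defers to \cite{OR19} for the classification anyway, this is a cosmetic inaccuracy in the commentary rather than a gap in the proof.
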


\begin{proof}
Parts (1), (2) and (3) follow from \cite[Theorem 5.12]{OR19}. If $x \notin T$ then $x$ is $\Aut_\cF(P)$-conjugate to an element of $Z(S) \le T$ where $P=Z(S)\langle x \rangle \in \cH$ and $\cF$ is connected.
\end{proof}

In particular, for $\cF$ as described in Theorem \ref{t:or2}, we have $\cH=P^\cF$ for any $P \in \cH$, and Lemma \ref{l:chars}(1) applied in the case $X_1 = 1$ and  $H=\SL_2(\ell)$ yields $$\bw(\cF)=z(k\Out_\cF(S))+z(k\Out_\cF(P))=(\ell-1)+z(k\SL_2(\ell))=\ell=|\Irr(\Aut_\cF(T))|,$$
 so Theorem \ref{t:main} holds in this case. Thus for the remainder of this subsection we may assume that $T$ is an $\cF$-centric radical subgroup of $S$.

The following notation from \cite{OR19} provides a convenient way to describe the actions of $\Out_\cF(S)$ and $\Out_\cF(T)$ on $S/T$ and $Z(S) \cap [S,S]$, as well as the two-way traffic between them. Define: $$\Aut^\vee(S):=\{\alpha \in \Aut(S) \mid [\alpha, Z(S)] \le Z(S) \cap [S,S] \} \mbox{ and } \Aut^\vee(T) = \{\alpha|_T \mid \alpha \in \Aut^\vee(S) \}$$
and let
$$\Aut^\vee_\cF(S):=\Aut^\vee(S) \cap \Aut_\cF(S) \mbox{ and } \Aut^\vee_\cF(T):=\Aut^\vee(T) \cap \Aut_\cF(T).$$ Hence, $$\Aut_\cF^\vee(T)=\{\beta \in N_{\Aut_\cF(T)}(\Aut_S(T)) \mid [\beta,Z(S)] \le Z(S) \cap [S,S]\}$$
(see \cite[Notation 2.9]{OR19}). Observe that $\Inn(S) \le  \Aut_\cF^\vee(S)$ and $\Aut_\cF^\vee(S) \unlhd \Aut_\cF(S)$ (it is the kernel of a homomorphism to $\Aut(Z(S)/(Z(S) \cap [S,S]))$ so we may define $\Out_\cF^\vee(S):=\Aut_\cF^\vee(S)/\Inn(S).$ Now set, $$\Delta:=(\ZZ/\ell)^\times \times (\ZZ/\ell)^\times, \mbox{ and } \Delta_n:=\{(r,r^i) \mid r \in (\ZZ/\ell)^\times\} \le \Delta \mbox{ for $i \in \ZZ$ }, $$
define $$\mu: \Aut_\cF^\vee(S) \longrightarrow \Delta \mbox{ via } \mu(\alpha)=(r,s) \mbox{ if } \begin{cases} x\alpha \in x^rT & \mbox{ for $x \in S \backslash T$ } \\ g\alpha = g^s & \mbox{ for $ g \in Z(S) \cap [S,S],$} \end{cases}$$
and let $$\mu_T: \Aut_\cF^\vee(T) \longrightarrow \Delta \mbox{ and } \hat{\mu}: \Out_\cF^\vee(S) \longrightarrow \Delta$$ be given by $\mu_T(\alpha|_T)=\mu(\alpha)$ and $\hat{\mu}([\alpha])=\mu(\alpha)$ if $\alpha \in \Aut_\cF^\vee(S)$ (here $[\alpha]$ denotes the class of $\alpha$ in $\Out_\cF^\vee(S)$).

 We begin by appealing to results in \cite{COS17, OR19} to describe particular groups of automorphisms of subgroups which turn out to be $\cF$-centric radical. The following result is crucial to our analysis.
 
\begin{lem}\label{l:O14adapt}
Let $A$ be an abelian discrete $\ell$-toral group and $G \le \Aut(A)$ be finite. Assume $v_\ell(|G|)=1$, $O_\ell(G)=1$ and that $|[x,A]|=\ell$ for every element $x \in G$ of order $\ell$. Then, there  is a factorization $A=A_1 \times A_2$ with $A_2 \cong C_\ell \times C_\ell$ such that $\Aut_G(A_2)$ contains $\SL_2(\ell)$ and $G$ is a normal subgroup of index dividing $\ell-1$ in $\Aut_G(A_1) \times \Aut_G(A_2)$.
\end{lem}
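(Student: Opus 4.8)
The plan is to pin down the action of a Sylow $\ell$-subgroup of $G$, extract from it a $2$-dimensional subquotient on which a suitable normal subgroup induces $\SL_2(\ell)$, split that off as a $G$-invariant direct factor $A_2$, and then read off the index statement from Goursat's lemma. First I would fix $P=\langle x\rangle\in\Syl_\ell(G)$, so $|P|=\ell$ by hypothesis. Since $\Aut(C_\ell)$ is an $\ell'$-group and $[x,A]\cong C_\ell$ is $\langle x\rangle$-invariant, $x$ centralises $[x,A]$, so $(x-1)^2A=(x-1)[x,A]=0$, i.e.\ $x$ acts quadratically; moreover $x$ centralises the maximal divisible subgroup $T_0\le A$, because $[x,T_0]\le[x,A]$ is a divisible group of order dividing $\ell$. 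The same holds for every element of $G$ of order $\ell$, these all being conjugate into $P$. Hence on the elementary abelian $G$-module $V:=A[\ell]$ every order-$\ell$ element acts as the identity or an $\FF_\ell$-transvection, and since conjugates of a transvection are transvections these cases occur uniformly. The identity case cannot occur: a coprime automorphism of a finite abelian $\ell$-group which centralises its $\ell$-torsion is trivial (by coprime action), so $C_{\Aut(A[\ell^n])}(A[\ell])$ is an $\ell$-group for each $n$; thus if $N:=\langle x^G\rangle$ centralised $V$ it would be an $\ell$-group (being faithful on $A[\ell^n]$ for $n\gg 0$), contradicting $O_\ell(G)=1$. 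So every order-$\ell$ element of $G$ acts as a transvection on $V$.

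Next, $N\unlhd G$ induces on $W:=[V,N]$ a group generated by transvections, with $O_\ell(N)=1$ and $v_\ell(|N|)=1$. Invoking the classification of finite irreducible linear groups generated by transvections (McLaughlin's theorem, in the form used in \cite{COS17, OR19}), the constraint $v_\ell(|N|)=1$ together with the hypothesis that \emph{every} order-$\ell$ element has commutator of order exactly $\ell$ rules out all candidates except $\SL_2(\ell)$ on a $2$-dimensional module: for instance $v_\ell(|S_m|)=1$ for $\ell\le m<2\ell$, but an $\ell$-cycle of $S_m$ has commutator of order larger than $\ell$ on the deleted permutation module, while $\SL_n(\ell)$ for $n\ge 3$ and the symplectic, orthogonal and unitary cases all have $v_\ell\ge 2$. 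Hence $\dim_{\FF_\ell}W=2$ and $\Aut_N(W)\supseteq\SL_2(\ell)$. Using the transvection property one checks $[A,x^g]=[V,x^g]$ for every $g$, so $A_2:=[A,N]=W$ is elementary abelian of order $\ell^2$, $G$-invariant, and $\Aut_G(A_2)\supseteq\SL_2(\ell)$ since $G/N$ is an $\ell'$-group.

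The crux is to see that $A_2$ is a $G$-invariant direct factor of $A$. Since $\SL_2(\ell)$ is transitive on $A_2\setminus\{0\}$, either $A_2\cap\ell A=0$ or $A_2\le\ell A$; in the latter case $N$ acts trivially on the Frattini quotient of the finite group $A/T_0$, hence $N$ is an $\ell$-group, a contradiction. So $A_2$ is a direct summand of $A$ as an abelian group. As $A/A_2=A/[A,N]$ is a trivial $N$-module, the obstruction to choosing an $N$-invariant complement to $A_2$ lies in $H^1(N,A_2)$, which by inflation--restriction embeds into $H^1(\SL_2(\ell),L(1))$, where $L(1)$ is the natural module; this vanishes by the Lyndon--Hochschild--Serre sequence, as $O_{\ell'}(\SL_2(\ell))$ has order prime to $\ell$ and no nonzero fixed vectors on $L(1)$. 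An $N$-invariant complement is then a trivial $N$-submodule complementing $A_2$, so it equals $A_1:=C_A(N)$, which is $G$-invariant; thus $A=A_1\times A_2$.

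Finally, $G\to\Aut_G(A_1)\times\Aut_G(A_2)$ is injective with both projections surjective, so by Goursat's lemma $[\Aut_G(A_1)\times\Aut_G(A_2):G]$ equals the order of the common quotient $\Aut_G(A_2)/\overline{C_G(A_1)}$; since $N\le C_G(A_1)$ and $\Aut_N(A_2)\supseteq\SL_2(\ell)$, this quotient is a quotient of $\GL_2(\ell)/\SL_2(\ell)\cong C_{\ell-1}$, hence abelian of order dividing $\ell-1$, which also forces $G\unlhd\Aut_G(A_1)\times\Aut_G(A_2)$. I expect the main obstacle to be the transvection-classification step: one must first know that $W=[V,N]$ is \emph{irreducible} (and separately dispose of the small primes $\ell\in\{2,3\}$) before the classification applies, and it is precisely this, together with the identification of the automorphism groups of the relevant subgroups, that \cite{COS17, OR19} supply. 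A secondary technical point is making the Frattini and $\mathrm{Ext}$ arguments of the previous paragraph go through when $A$ is infinite, by passing to $A/T_0$ and using $A_2^N=0$.
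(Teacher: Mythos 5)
The paper's own ``proof'' of this lemma is a one-line citation to \cite[Proposition~A.7]{OR19}, so your attempt at a self-contained argument is doing genuinely more than the text does, and much of it is sound. In particular: the deduction that $x$ acts quadratically and centralises the maximal divisible subgroup $T_0$; the observation that every order-$\ell$ element acts as a transvection on $V=A[\ell]$ (the identity case being excluded via $O_\ell(G)=1$ and coprime action on $A[\ell^n]$ for $n\gg0$); the identification $A_2=[A,N]=[V,N]$; the purity argument showing $A_2$ is an abstract direct summand; the $H^1$-vanishing argument producing the $N$-invariant complement $A_1=C_A(N)$ (here $A/(A_1+A_2)\hookrightarrow H^1(N,A_2)$ via $a\mapsto(n\mapsto(n-1)a)$, which is the right formulation); and the Goursat computation at the end are all correct.

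The genuine gap, which you flag but do not close, is the McLaughlin step. McLaughlin's theorem classifies \emph{irreducible} subgroups of $\GL(W)$ generated by transvections, and you establish only that $N=\langle x^G\rangle$ acts \emph{faithfully} on $W=[V,N]$ (via $C_N(W)\le O_\ell(G)=1$), not that it acts irreducibly. Faithfulness does not give irreducibility, and nothing in the hypotheses obviously forbids $W$ from being larger than two-dimensional with $N$ acting reducibly or with trivial composition factors; the hypothesis $|[x,A]|=\ell$ is only used \emph{after} the identification $N\cong\SL_2(\ell)$ to discard the other entries on McLaughlin's list. Until $\dim_{\FF_\ell}W=2$ and $\Aut_N(W)\supseteq\SL_2(\ell)$ are actually proved, the claims $A_2\cong C_\ell\times C_\ell$, $A_2^N=0$, and $\Aut_G(A_2)\le\GL_2(\ell)$, on which the splitting and the Goursat index bound rely, are not available. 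This is precisely the content that \cite[Proposition~A.7]{OR19} supplies, so the citation cannot be dispensed with at this point. A secondary, smaller issue: for $\ell=2$ the elimination of $S_m$ fails as stated, since a transposition \emph{is} a transvection on the deleted permutation module over $\FF_2$; one must instead note that $v_2(|S_m|)=1$ forces $m\le 3$, and $S_2$ is a $2$-group while $S_3\cong\SL_2(2)$, so the conclusion still holds but needs this separate argument.
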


\begin{proof}
See \cite[Proposition A.7]{OR19}.
\end{proof}
 
We first deal with the subgroups $Q \in \cB \cup \cH$ described above.

\begin{lem}\label{l:outfp}
Let $Q \in \cB \cup \cH$. There are unique subgroups $\tilde{Z} \le Z(S)$ and $\tilde{Q} \ge Q \cap [S,S]$ such that 
\begin{itemize}
\item[(1)] if $Q \in \cH$ then $Q=\tilde{Z} \times \tilde{Q}$ and $\tilde{Q} \cong C_\ell \times C_\ell$; and
\item[(2)] if $Q \in \cB$ then $\tilde{Z}=Z(S)$, $Q=\tilde{Z}\tilde{Q}$ and $\tilde{Q} \cong \ell^{1+2}_+$.
\end{itemize}
In either case if $Q$ is $\cF$-centric radical there is a unique subgroup $\Theta \le \Aut(Q)$ containing $\Inn(Q)$ which acts trivially on $\tilde{Z}$, normalises $\tilde{Q}$ and is such that $\Theta/\Inn(Q) \cong \SL_2(\ell)$.  Moreover, there is an $\ell'$-subgroup $X$ of $\Out_\cF(Q)$ such that $$\Out_\cF(Q) = X(\Theta/\Inn(Q)) \mbox{ and } N_{\Out_\cF(Q)}(\Out_S(Q)) = X(C_{\ell} \rtimes C_{\ell-1}).$$ 

\end{lem}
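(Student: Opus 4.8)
The plan is to settle the group-theoretic assertions about $Q$ first --- the decompositions (1), (2) and the uniqueness, hence characteristicity, of $\tilde Z$ and $\tilde Q$ --- and then to pin down $\Out_\cF(Q)$ using saturation, Lemma~\ref{l:O14adapt}, and the automorphism theory of extraspecial $\ell$-groups.

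\textbf{Structure of $Q$.} Since $|S:T|=\ell$ and $S$ is non-abelian, $S=T\langle t\rangle$ with $t$ of order $\ell$ inducing an order-$\ell$ automorphism on $T$, and $[S,S]$, $Z(S)=C_T(t)$ and $Z_2(S)$ admit the explicit descriptions recorded in \cite{OR19, COS17}; in particular $Z(S)\cap[S,S]$ and (for $Q\in\cH$) $Q\cap[S,S]$ are cyclic of order $\ell$. For $Q=Z(S)\langle x\rangle\in\cH$ the group $Q$ is abelian; I would take $\tilde Q$ to be generated by $Q\cap[S,S]$ together with a chosen element of order $\ell$ in $Q\setminus T$, check that it is the \emph{unique} subgroup of $Q$ isomorphic to $C_\ell\times C_\ell$ that contains $Q\cap[S,S]$ and is not contained in $T$, and let $\tilde Z\le Z(S)$ be the unique complement to $\tilde Q\cap Z(S)$ inside $Z(S)$; this yields (1). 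For $Q=Z_2(S)\langle x\rangle\in\cB$, a direct computation inside $S$ shows the analogous subgroup $\tilde Q$ is extraspecial of order $\ell^{3}$ and exponent $\ell$ with centre $Z(S)\cap[S,S]$, that $Q=Z(S)\tilde Q$ with $Z(S)\cap\tilde Q=Z(\tilde Q)$, and hence $Z(Q)=Z(S)=\tilde Z$ and $[Q,Q]=Z(\tilde Q)$; this is (2). In both cases the stated properties determine $\tilde Z$ and $\tilde Q$ uniquely, so they are characteristic in $Q$ and $\Aut_\cF(Q)$ acts on each --- this is what makes the next step possible.

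\textbf{The automorphisms.} Assume now $Q$ is $\cF$-centric radical and, replacing it by an $\cF$-conjugate, fully $\cF$-normalised, so that $O_\ell(\Out_\cF(Q))=1$ and $\Out_S(Q)\in\Syl_\ell(\Out_\cF(Q))$. From the structure of $S$ one reads off $|N_S(Q):Q|=\ell$, hence $|\Out_S(Q)|=\ell$ and $v_\ell(|\Out_\cF(Q)|)=1$; moreover $N_S(Q)$ centralises $\tilde Z\le Z(S)$, so the Sylow $\ell$-subgroup of $\Out_\cF(Q)$ acts trivially on $\tilde Z$ and $\Aut_{\Out_\cF(Q)}(\tilde Z)$ is an $\ell'$-group. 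For $Q\in\cH$ one has $\Inn(Q)=1$ and $\Out_\cF(Q)=\Aut_\cF(Q)\le\Aut(Q)$, and I would apply Lemma~\ref{l:O14adapt} to $A=Q$, $G=\Aut_\cF(Q)$: the hypotheses $v_\ell(|G|)=1$ and $O_\ell(G)=1$ have been checked, and $|[y,Q]|=\ell$ for every $y\in G$ of order $\ell$ because any such $y$ is $G$-conjugate into $\Out_S(Q)$, whose non-trivial elements act on $Q$ with commutator of order $\ell$ (again by $|S:T|=\ell$). The lemma returns $Q=A_1\times A_2$ with $A_2\cong C_\ell\times C_\ell$, $\SL_2(\ell)\le\Aut_G(A_2)\le\GL_2(\ell)$, and $G$ normal of index dividing $\ell-1$ in $\Aut_G(A_1)\times\Aut_G(A_2)$; by the uniqueness in (1) this is the decomposition $\tilde Z\times\tilde Q$, and since $O^{\ell'}$ of the overgroup equals $\{1\}\times\SL_2(\ell)$ (as $\Aut_G(\tilde Z)$ is $\ell'$ and $\SL_2(\ell)=O^{\ell'}(\GL_2(\ell))$) and $G$ has $\ell'$-index in it, we get $\Theta/\Inn(Q)=\{1\}\times\SL_2(\ell)=O^{\ell'}(\Out_\cF(Q))\le\Out_\cF(Q)$. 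Combining with the explicit shape of $\Aut_\cF(Q)$ from \cite{OR19, COS17} produces the $\ell'$-complement $X$ with $\Out_\cF(Q)=X(\Theta/\Inn(Q))$; and since $\Out_S(Q)$ maps onto a transvection subgroup $U\le\SL_2(\ell)$ with $N_{\SL_2(\ell)}(U)\cong C_\ell\rtimes C_{\ell-1}$ while $X$ centralises $\tilde Q$ and so normalises $\Out_S(Q)$, also $N_{\Out_\cF(Q)}(\Out_S(Q))=X(C_\ell\rtimes C_{\ell-1})$. For $Q\in\cB$ the parallel argument runs on the characteristic extraspecial subgroup $\tilde Q$: restriction gives $\Out_\cF(Q)\to\Aut(\tilde Q)/\Inn(\tilde Q)\cong\GL_2(\ell)$ with kernel $C_{\Out_\cF(Q)}(\tilde Q)=\Aut_{\Out_\cF(Q)}(\tilde Z)$ (using $C_S(\tilde Q)=C_S(Q)=\tilde Z$), an $\ell'$-group; the image contains an element of order $\ell$ and has trivial $O_\ell$, so it contains $\SL_2(\ell)$, and imposing triviality on $Z(Q)\supseteq Z(\tilde Q)$ identifies the preimage of that $\SL_2(\ell)$ with the required $\Theta$, after which $X$ and the normaliser are obtained exactly as above.

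\textbf{Main obstacle.} The only non-formal inputs are the three structural facts about $S$ extracted from \cite{OR19, COS17}: that $Q\cap[S,S]$ (equivalently $Z(\tilde Q)$ in the $\cB$ case) has order exactly $\ell$; that $|N_S(Q):Q|=\ell$; and that non-trivial elements of $\Out_S(Q)$ act on $Q$ with commutator subgroup of order $\ell$ --- this last being precisely the hypothesis that feeds Lemma~\ref{l:O14adapt}. Granting these, the rest is bookkeeping in $\GL_2(\ell)$, $\SL_2(\ell)$ and $\ell^{1+2}_+$, using that a subgroup of $\GL_2(\ell)$ containing an element of order $\ell$ with trivial $O_\ell$ must contain $\SL_2(\ell)$, and that $N_{\SL_2(\ell)}(U)\cong C_\ell\rtimes C_{\ell-1}$. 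The one place where one genuinely needs the explicit automorphism data of \cite{OR19} rather than generalities is in exhibiting $X$ as an honest \emph{subgroup} (not just a quotient) of $\Out_\cF(Q)$; a secondary care-point, handled in the first step, is matching the abstract factorisation from Lemma~\ref{l:O14adapt} with the canonical one $\tilde Z\times\tilde Q$ so that $\Theta$ is well defined.
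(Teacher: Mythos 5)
Your overall plan mirrors the paper's: parts (1) and (2) are taken from \cite[Lemma 5.9]{OR19}, and Lemma~\ref{l:O14adapt} is used to extract the $\SL_2(\ell)$-factor and the $\ell'$-complement. The main divergence is in the $\cB$ case: the paper applies Lemma~\ref{l:O14adapt} to the \emph{abelian quotient} $Q/(Z(S)\cap[S,S])$, so that the same lemma handles $\cH$ and $\cB$ uniformly, whereas you switch to a restriction map $\Out_\cF(Q)\to\Out(\tilde Q)\cong\GL_2(\ell)$. The paper also quotes the proof of \cite[Theorem 5.11]{OR19} directly for the existence of $\Theta$ and its $\ell'$-index in $\Aut_\cF(Q)$, rather than re-deriving them; your derivation of $\Theta=O^{\ell'}(\Aut_\cF(Q))$ via $O^{\ell'}$ of the overgroup $\Aut_G(A_1)\times\Aut_G(A_2)$ is a reasonable alternative route in the $\cH$ case (once one justifies that the abstract factorisation of Lemma~\ref{l:O14adapt} can be taken to be $\tilde Z\times\tilde Q$, which you rightly flag).

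There is a genuine gap in your $\cB$-case argument. You claim that the image $H$ of $\Out_\cF(Q)$ in $\Out(\tilde Q)\cong\GL_2(\ell)$ has trivial $O_\ell$, citing $O_\ell(\Out_\cF(Q))=1$ and the kernel being an $\ell'$-group. But a quotient by an $\ell'$-normal subgroup can acquire a non-trivial $O_\ell$: for instance $C_7\rtimes C_3\twoheadrightarrow C_3$ has $O_3$ of the source trivial and $O_3$ of the target the whole group. So the inference ``$H$ contains an element of order $\ell$ and has trivial $O_\ell$, hence $\SL_2(\ell)\le H$'' is not justified as written, and if $O_\ell(H)\neq 1$ then $H$ sits inside $N_{\GL_2(\ell)}(U)$ and you get no $\SL_2(\ell)$ at all. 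To close this you need to establish $O_\ell(H)=1$ by other means --- either by passing to the abelian quotient $Q/(Z(S)\cap[S,S])$ and invoking Lemma~\ref{l:O14adapt} as the paper does, or by quoting \cite[Theorem 5.11]{OR19} for the existence of $\Theta$ (and hence of an $\SL_2(\ell)$) directly.
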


\begin{proof}
Parts (1) and (2) are shown in \cite[Lemma 5.9]{OR19}. The existence of the subgroup $\Theta$ with the stated properties is argued in the proof of \cite[Theorem 5.11]{OR19} where it is also shown that $\Theta$ has $\ell'$-index in $\Aut_\cF(Q)$. From Lemma \ref{l:O14adapt} applied with $Q$  (case (1)) or $Q/(Z(S) \cap [S,S])$ (case (2)) and $\Theta$ in the roles of $A$ and $G$ respectively we have $$X_1(\Theta/\Inn(Q)) \le \Out_\cF(Q) \le X_1 \times X_2$$ where $X_2 \cong \GL_2(\ell)$ (see also \cite[Lemma 5.9(b)(iv)]{OR19}). This description of $\Out_\cF(Q)$ implies that there must exist a group $X$ with the stated properties. The lemma follows.
\end{proof}

We next deal with automorphisms of $S$ and $T$.

\begin{lem}\label{l:outftands}
Suppose $Q \in \cB \cup \cH$ is $\cF$-centric radical and $X$ is as in Lemma \ref{l:outfp}. For some $t \in \{0,-1\}$ the following hold:
\begin{itemize}
\item[(1)] There is an $\ell'$-subgroup $Y \le \Out_\cF(S)$ with $Y \cong X$ such that $$\Out_\cF(S) =  Y \times  \Out_\cF^\vee(S) \cap \hat{\mu}^{-1}(\Delta_t),$$ where $\Out_\cF^\vee(S) \cap \hat{\mu}^{-1}(\Delta_t) \cong C_{\ell-1}$.
\item[(2)] There is an $\ell'$-subgroup $Z \le N_{\Aut_\cF(T)}(\Aut_S(T))$ where $Z \cong X$ such that  $$N_{\Aut_\cF(T)}(\Aut_S(T)) = Z( \Aut_\cF^\vee(T) \cap \mu_T^{-1}(\Delta_t)),$$ with $\Aut_\cF^\vee(T) \cap \mu_T^{-1}(\Delta_t)  \cong C_\ell \rtimes C_{\ell-1}$.
\end{itemize}
\end{lem}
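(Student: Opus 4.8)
The plan is to build on Lemma~\ref{l:outfp}, which already pins down $\Out_\cF(Q)$ and its normaliser of $\Out_S(Q)$ for the $\cF$-centric radical member $Q$ of $\cB\cup\cH$, and to transport the $\ell'$-part $X$ of $\Out_\cF(Q)$ to $S$ and $T$ using the structural dictionary of \cite{OR19}. First I would invoke the relevant result of Oliver--Ruiz (the analysis behind \cite[Theorem~5.11, 5.12]{OR19}) to see that $\Out_\cF(S)$ acts on $S/T\cong C_\ell$ and on $Z(S)\cap[S,S]\cong C_\ell$, and that the kernel $\Out_\cF^\vee(S)$ of the composite map to $\Aut(S/T)$ (equivalently the part captured by $\hat\mu$) together with an $\ell'$-complement accounts for all of $\Out_\cF(S)$; the same applies to $N_{\Aut_\cF(T)}(\Aut_S(T))$ via $\mu_T$. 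The map $\mu$ (resp.\ $\mu_T$, $\hat\mu$) lands in $\Delta=(\ZZ/\ell)^\times\times(\ZZ/\ell)^\times$, and saturation forces the image of $\hat\mu$ to be one of the one-parameter subgroups $\Delta_t$: here the constraint is that an order-$\ell$ automorphism of $S$ (which exists inside $\Aut_S(T)$-conjugation or inside $\Theta$) acts with $|[x,A]|=\ell$, and compatibility with how $\Out_S(Q)$ sits inside $\Out_\cF(Q)$ for $Q\in\cB\cup\cH$ restricts the relation between the action on $S/T$ and the action on $Z(S)\cap[S,S]$ to $s=r^t$ with $t\in\{0,-1\}$ (this is exactly the dichotomy recorded in \cite[Lemma~5.9, Theorem~5.11]{OR19}, where $t=0$ is the $\cB$-type and $t=-1$ the $\cH$-type, or vice versa).

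Concretely, the steps are: (i) identify $t$ as the exponent determined by the $\Out_\cF(Q)$-action — since $\Out_S(Q)$, $\Theta/\Inn(Q)\cong\SL_2(\ell)$ and $X$ all have prescribed images under the analogue of $\mu$ for $Q$, and since $Q$ determines $Z(S)$ and $S/T$ compatibly, the value of $t$ is forced and lies in $\{0,-1\}$; (ii) for part (1), observe $\Out_\cF^\vee(S)\unlhd\Out_\cF(S)$ with cyclic $\ell'$-quotient $\Out_\cF(S)/\Out_\cF^\vee(S)$, and that $\Out_\cF^\vee(S)$ itself, being the kernel of $\hat\mu$ composed with projection, contains the $\ell$-part; using $O_\ell(\Out_\cF(T))$-arguments and the Schur--Zassenhaus theorem split off an $\ell'$-complement $Y$; then show $Y\cong X$ by comparing with $\Out_\cF(Q)$ through the restriction/extension maps between $\Aut_\cF(S)$, $\Aut_\cF(Q)$ and $\Aut_\cF(T)$ in $\cF$ (the $\ell'$-parts must match because they are detected on the same torus $T$ via $N_{\Aut_\cF(T)}(\Aut_S(T))$); (iii) compute $\Out_\cF^\vee(S)\cap\hat\mu^{-1}(\Delta_t)$: it is the preimage of a subgroup of $\Delta_t\cong C_{\ell-1}$, and since modding by $\Inn(S)$ and by the $\ell'$-complement kills everything except a faithful cyclic action of order $\ell-1$ on $Z(S)\cap[S,S]$, it is exactly $C_{\ell-1}$; (iv) for part (2), repeat with $T$: $N_{\Aut_\cF(T)}(\Aut_S(T))$ contains $\Aut_S(T)\cong C_\ell$ as a normal subgroup, $\Aut_\cF^\vee(T)\cap\mu_T^{-1}(\Delta_t)$ is then $C_\ell\rtimes C_{\ell-1}$ by the same preimage count plus the known action of $W=\Out_\cF(T)$ on $T$, and the $\ell'$-complement $Z\cong X$ is extracted as before. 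The consistency of the $\ell'$-complements $X$, $Y$, $Z$ across $Q$, $S$, $T$ comes from the fact that in a saturated fusion system the inclusion $Q\le S$ and $T\le S$ induce the expected maps on automorphism groups, so all three are conjugate inside $\Aut_\cF(T)$.

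I expect the main obstacle to be step~(i)–(ii): precisely tying the abstract $\ell'$-complement $X\le\Out_\cF(Q)$ (produced by Lemma~\ref{l:O14adapt} as the complement to $\Theta/\Inn(Q)$ in a subgroup of $X_1\times\GL_2(\ell)$) to an $\ell'$-complement in $\Out_\cF(S)$ and in $N_{\Aut_\cF(T)}(\Aut_S(T))$ in a way that yields an honest isomorphism $Y\cong X\cong Z$, rather than merely groups of the same order. This requires carefully bookkeeping how $Z(S)$, $Z_2(S)$, $S/T$ and $Q/(Z(S)\cap[S,S])$ intertwine under $\cF$-conjugation, and invoking the fact (from \cite{OR19}) that $\cF$ is generated by $\Aut_\cF(S)$, $\Aut_\cF(T)$ and $\Aut_\cF(Q)$, so the three complements are forced to glue. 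Once the single value of $t\in\{0,-1\}$ is fixed by the $Q$-data, the remaining character-theoretic bookkeeping — that the preimages under $\hat\mu$ and $\mu_T$ of $\Delta_t$ are $C_{\ell-1}$ and $C_\ell\rtimes C_{\ell-1}$ respectively — is routine, since $\Delta_t\cong(\ZZ/\ell)^\times$ and the kernels of $\hat\mu$, $\mu_T$ restricted to $\Aut_\cF^\vee$ are, respectively, trivial modulo $\Inn(S)$ and equal to $\Aut_S(T)$.
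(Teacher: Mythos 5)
Your overall strategy — fix $t$ from the $Q$-data, use Schur--Zassenhaus to obtain $\ell'$-complements, and transport the group $X$ via restriction/extension maps between $\Aut_\cF(S)$, $\Aut_\cF(Q)$ and $\Aut_\cF(T)$ — is the right one, and you correctly identify the main difficulty: making the identification $Y \cong X \cong Z$ an honest isomorphism rather than a coincidence of orders. But that identification is exactly the point you leave unresolved, and the proof does not go through without it. The paper closes the gap as follows: take $Y_1$ to be an $\ell'$-complement to $\Inn(S)$ in $\Aut_\cF(S)$ and $Y_2$ an $\ell'$-complement to $\Aut_S(Q)$ in $N_{\Aut_\cF(Q)}(\Aut_S(Q))$, and show that the restriction map $\psi:\Aut_\cF(S)\to N_{\Aut_\cF(Q)}(\Aut_S(Q))$ restricts to an isomorphism $Y_1\to Y_2$. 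Injectivity is the crucial input: if $\alpha\in Y_1$ restricts trivially to $Q$, then $\alpha\in\Aut_\cF^\vee(S)$ with $\hat\mu([\alpha])=(0,0)$, and $\hat\mu$ is injective on $\Out_\cF^\vee(S)$ by \cite[Lemma~4.3(a)]{OR19}, so $\alpha=1$. Surjectivity comes from the extension axiom of a saturated fusion system. Combined with the decomposition of $N_{\Out_\cF(Q)}(\Out_S(Q))$ from Lemma~\ref{l:outfp} (and \cite[Lemma~5.9(b)(iii)]{OR19} to place the image of $\Aut_\cF^\vee(S)\cap\mu^{-1}(\Delta_t)$ inside $N_\Theta(\Aut_S(Q))$), this yields (1). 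You allude to ``the $\ell'$-parts must match because they are detected on the same torus,'' but without the injectivity of $\hat\mu$ you have no way to rule out a kernel, and your sketch never invokes it.

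Two further conceptual slips. First, you describe $\Out_\cF^\vee(S)$ as the kernel of a composite map to $\Aut(S/T)$; it is in fact the kernel of the map to $\Aut(Z(S)/(Z(S)\cap[S,S]))$, as stated when $\Out_\cF^\vee(S)$ is introduced, and this distinction matters because the $\hat\mu$-injectivity argument is about the $Z(S)$-action, not the $S/T$-action. Second, you speak of ``splitting off an $\ell'$-complement $Y$'' inside $\Out_\cF(S)$ and of $\Out_\cF^\vee(S)$ ``containing the $\ell$-part,'' but by the Sylow axiom of saturation $\Out_\cF(S)$ is already an $\ell'$-group; the Schur--Zassenhaus splitting happens in $\Aut_\cF(S)$ (complementing $\Inn(S)$) and in $N_{\Aut_\cF(Q)}(\Aut_S(Q))$ (complementing $\Aut_S(Q)$). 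Finally, for part (2) the paper does not rerun the argument with $T$ in place of $S$: it simply observes that every element of $N_{\Aut_\cF(T)}(\Aut_S(T))$ extends to an automorphism of $S$, so the description in (2) is read off from (1) with $Z=\{\alpha|_T\mid\alpha\in Y\}$, plus the observation that $\Aut_S(T)$ lies in $\Aut_\cF^\vee(T)\cap\mu_T^{-1}(\Delta_t)$ since $[x,Z(S)]\le Z(S)\cap[S,S]$ for $x\in S\setminus T$. Your plan for (2) is not wrong, but the direct transport is cleaner and is the intended argument.
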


\begin{proof}
By \cite[Corollary 2.6]{OR19}, we may choose $x \in S \backslash T$ of order $\ell$ and $Q \in \cB \cup \cH$ so that $x \in Q$. Plainly, each element of $\Aut_\cF(S)$ normalises $S/T \cong \langle x \rangle$, so in particular the restriction to $Q$ of any $\ell'$-element in $\Aut_\cF(S)$ lies in $N_{\Aut_\cF(Q)}(\Aut_S(Q))$. By the Schur--Zassenhaus theorem, $\Inn(S)$ and $\Aut_S(Q)$ are complemented by $\ell'$-groups $Y_1$ and $Y_2$ in $\Aut_\cF(S)$ and $N_{\Aut_\cF(Q)}(\Aut_S(Q))$ respectively. We claim that the restriction map $\psi: \Aut_\cF(S) \longrightarrow N_{\Aut_\cF(Q)}(\Aut_S(Q))$ restricts to an isomorphism from $Y_1$ to $Y_2$. If $\alpha \in \ker(\psi|_{Y_1})$ then $\alpha$ acts trivially on $Q$ so $\alpha \in \Aut_\cF^\vee(S)$, $\hat{\mu}([\alpha])=(1,1)$ and  hence $[\alpha]=1$ since $\hat{\mu}|_{\Out_\cF^\vee(S)}$ is injective (by \cite[Lemma 4.3(a)]{OR19}). We conclude that $\alpha=1$ whence $\psi|_{Y_1}$ is injective.  Conversely, since $\cF$ is saturated, every element of $Y_2$ must extend to an element of $Y_2$ and $\psi|_{Y_1}$ is also surjective. We conclude that $Y_1 \cong Y_2$.  By \cite[Lemma 5.9(b)(iii)]{OR19}, the restriction to $Q$ of elements in $\Aut_\cF^\vee(S) \cap \mu^{-1}(\Delta_t)$ lies in $N_{\Theta}(\Aut_S(Q)) \cong C_\ell:C_{\ell-1}$. Hence, by Lemma \ref{l:outfp}, we must have $\Out_\cF(S)=Y \times \Out_\cF^\vee(S) \cap \widehat{\mu}^{-1}(\Delta_t)$ where $$X \cong Y:=\{\alpha \in \Out_\cF(S) \mid [\alpha,Z(S)] \nleq Z(S) \cap [S,S]]\},$$ 
(with $X$ is as in Lemma \ref{l:outfp}). Hence (1) holds. We next prove (2). Since every element of $N_{\Aut_\cF(T)}(\Aut_S(T))$ extends to an automorphism of $S$, the description of this group follows from (1) with $Z=\{\alpha|_T \mid \alpha \in Y\}$. Moreover, $\Aut_S(T) \le \Aut_\cF^\vee(T), \mu_T^{-1}(\Delta_t)$ since $[x,Z(S)] \le Z(S) \cap [S,S]$ for $x \in S \backslash T$ so $\Aut_\cF^\vee(T) \cap \mu_T^{-1}(\Delta_t) \cong C_\ell \rtimes C_{\ell-1}$ and (2) holds. 
\end{proof}

The structure of $\cF$ is now given as follows. 

\begin{thm}\label{t:or1}
Let $\cF$ be a simple $\ell$-local compact group on $S$ with discrete torus $T$ of index $\ell$ which is $\cF$-centric radical. Then $\cF$ is connected and for some $t \in \{0,-1\}$ one of the following holds:
\begin{itemize}
\item[(1)] $\dim(\Omega_1(T))=\ell-1$, $\mu_T(\Aut_\cF^\vee(T)) \ge \Delta_t$ and $\Aut_\cF(T)=O^{\ell'}(\Aut_\cF(T))\mu_T^{-1}(\Delta_t)$; 
\item[(2)] $\dim(\Omega_1(T)) \ge \ell$, $t=0$, $\mu_T(\Aut_\cF^\vee(T))=\Delta_0$ and $\Aut_\cF(T)=O^{\ell'}(\Aut_\cF(T))\Aut_\cF^\vee(T)$.
\end{itemize}
In either case, $$\cF^{cr}=\begin{cases} \{S,T\} \cup \cH & \mbox{ if $t=-1$ } \\ \{S,T\} \cup \cB & \mbox{ if $t=0$. } \end{cases} $$ where the sets $\cH$ and $\cB$ each form a single $S$-conjugacy class and there is an $\ell'$-group $X$ such that for each $Q \in \cH$ (if $t=-1$) or $Q \in \cB$ (if $t=0$), $$\Out_\cF(S) \cong C_{\ell-1} \times X, \hspace{2mm} N_{\Aut_\cF(T)}(\Aut_S(T)) \cong (C_\ell : C_{\ell-1})X, \hspace{2mm} \mbox{ and }\hspace{2mm}  \Out_\cF(Q) \cong \SL_2(p) X.$$
\end{thm}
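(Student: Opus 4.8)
This theorem is, in essence, a restatement in the terminology of the present section of the classification of simple fusion systems over a non-abelian discrete $\ell$-toral group $S$ with $|S:T|=\ell$ and $T$ $\cF$-centric radical carried out in~\cite[Section~5]{OR19}; the plan is to deduce it from that classification together with Lemmas~\ref{l:outfp} and~\ref{l:outftands}, which already encode the local structure at $S$, at $T$, and at an $\cF$-centric radical member $Q$ of $\cB\cup\cH$. First I would invoke~\cite[Corollary~2.6]{OR19} to fix an element $x\in S\setminus T$ of order $\ell$ together with a subgroup $Q\in\cB\cup\cH$ with $x\in Q$, and then appeal to the structural analysis of~\cite[Section~5]{OR19} (see in particular~\cite[Theorem~5.11]{OR19}): every $\cF$-centric radical subgroup lies in $\{S,T\}\cup\cB\cup\cH$, and the case distinctions there produce a parameter $t\in\{0,-1\}$ for which either $\dim(\Omega_1(T))=\ell-1$, $\mu_T(\Aut_\cF^\vee(T))\ge\Delta_t$ and $\Aut_\cF(T)=O^{\ell'}(\Aut_\cF(T))\mu_T^{-1}(\Delta_t)$, or else $\dim(\Omega_1(T))\ge\ell$, $t=0$, $\mu_T(\Aut_\cF^\vee(T))=\Delta_0$ and $\Aut_\cF(T)=O^{\ell'}(\Aut_\cF(T))\Aut_\cF^\vee(T)$. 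The same analysis shows that precisely one of the families $\cH$ (when $t=-1$) and $\cB$ (when $t=0$) consists of $\cF$-centric radical subgroups, and that — by~\cite[Lemma~5.9]{OR19} and the structure of $S$ recorded there — $\cH$ and $\cB$ each form a single $S$-conjugacy class. I would then fix a representative $Q$ of the $\cF$-centric radical family, so that $Q$ is $\cF$-centric radical and Lemmas~\ref{l:outfp} and~\ref{l:outftands} apply to it.

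Next I would check connectedness, following the proof of Theorem~\ref{t:or2}. Given $y\in S\setminus T$, the subgroup $Q_y:=Z(S)\langle y\rangle\in\cH$ (if $t=-1$) or $Q_y:=Z_2(S)\langle y\rangle\in\cB$ (if $t=0$) lies in the $\cF$-centric radical family, hence is $\cF$-centric radical and $S$-conjugate to $Q$; by Lemma~\ref{l:outfp} its automizer $\Aut_\cF(Q_y)$ contains a copy of $\SL_2(\ell)$ acting on $Q_y$ (if $t=-1$) or on $Q_y/(Z(S)\cap[S,S])$ (if $t=0$) which is transitive on the nonzero vectors of the associated rank-two $\FF_\ell$-module, and since $y$ maps to a nonzero vector there this forces $y$ to be $\Aut_\cF(Q_y)$-conjugate into $T$ (into $Z(S)\le T$ when $t=-1$, exactly as in Theorem~\ref{t:or2}). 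Thus every element of $S$ is $\cF$-conjugate into $T$, i.e.\ $\cF$ is connected.

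Finally I would read off the three displayed isomorphisms. Lemma~\ref{l:outfp} applied to $Q$ produces an $\ell'$-group $X$ with $\Out_\cF(Q)=X(\Theta/\Inn(Q))\cong\SL_2(\ell)X$ and $N_{\Out_\cF(Q)}(\Out_S(Q))=X(C_\ell\rtimes C_{\ell-1})$, and feeding this same $Q$ and $X$ into Lemma~\ref{l:outftands} gives $\ell'$-subgroups $Y\le\Out_\cF(S)$ and $Z\le N_{\Aut_\cF(T)}(\Aut_S(T))$, each isomorphic to $X$, with
\[ \Out_\cF(S)=Y\times\bigl(\Out_\cF^\vee(S)\cap\hat{\mu}^{-1}(\Delta_t)\bigr)\cong C_{\ell-1}\times X \]
and
\[ N_{\Aut_\cF(T)}(\Aut_S(T))=Z\bigl(\Aut_\cF^\vee(T)\cap\mu_T^{-1}(\Delta_t)\bigr)\cong (C_\ell\rtimes C_{\ell-1})X. \]
Since all members of the chosen family are $S$-conjugate their $\cF$-automizers are isomorphic, so a single $\ell'$-group $X$ works throughout and the theorem follows (with $\SL_2(\ell)$ in place of the misprint $\SL_2(p)$). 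The only genuine work beyond quoting the lemmas lies in the first paragraph — distilling from the long case analysis of~\cite{OR19} the precise list $\cF^{cr}$, the $t$-dichotomy, and the two possible forms of $\Aut_\cF(T)$ — together with the verification, which is exactly what Lemma~\ref{l:outftands} is set up to deliver, that the $\ell'$-complement $X$ found at $Q$ is compatible with the complements appearing at $S$ and at $T$; I expect that compatibility check to be the main obstacle.
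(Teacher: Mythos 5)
Your proposal is correct and follows essentially the same route as the paper's own proof: cite the classification in \cite[Section~5]{OR19} for the case dichotomy and the determination of $\cF^{cr}$, invoke Lemmas~\ref{l:outfp} and~\ref{l:outftands} for the automizer structures, and deduce connectedness from the $\SL_2(\ell)$ inside $\Aut_\cF(Q)$ conjugating elements of $S\setminus T$ into $T$. The only minor discrepancy is a citation detail: the paper attributes the single-$S$-class claim for $\cH$ and $\cB$ to \cite[Lemma~5.4]{OR19} and the $t$-dichotomy to \cite[Theorem~B(a)]{OR19} rather than to Lemma~5.9 and Theorem~5.11 respectively, but the substance of the argument is the same.
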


\begin{proof}
Parts (1) and (2) and the description of $\cF^{cr}$ follow from \cite[Theorem B(a)]{OR19}. $\cH$ and $\cB$ each consist of one $S$-conjugacy class by \cite[Lemma 5.4]{OR19}. The descriptions of $\Out_\cF(Q)$, $\Out_\cF(S)$ and $N_{\Aut_\cF(T)}(\Aut_S(T))$ follow from more precise descriptions in Lemmas \ref{l:outfp} and \ref{l:outftands}. 
By \cite[Corollary 5.2]{OR19}, $T$ is the unique abelian subgroup of index $\ell$ in $S$, and moreover, any $x \in S \backslash T$ is $\cF$-conjugate to an element of $Q \cap [S,S] < T$ via an $\cF$-automorphism of $Q=Z(S)\langle x \rangle$ $(t=-1)$ or  $Q=Z_2(S)\langle x \rangle$ $(t=0)$. Hence $\cF$ is connected. 
\end{proof}

 We end this section by showing that $\cF$ can be expressed as direct limit of finite fusion systems. 

\begin{prop}\label{p:approx}
Let $\cF$ be a simple fusion system over over an infinite non-abelian discrete $\ell$-toral group $S$ with discrete torus $T$ of index $\ell$. Then $\displaystyle\cF = \lim_{\substack{\longrightarrow \\ n \ge 1}} \cF_n$ for saturated fusion systems $\cF_n$ on $S_n \le S$ with $\displaystyle S = \lim_{\substack{\longrightarrow \\ n \ge 1}} S_n$.
\end{prop}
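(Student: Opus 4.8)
\emph{Plan.} The strategy is to extract a cofinal tower of finite saturated fusion systems from the explicit description of $\cF$ given by Theorems~\ref{t:or2} and~\ref{t:or1}, and then to identify $\cF$ with its direct limit via Alperin's fusion theorem. In the situation of Theorem~\ref{t:or2}, $\cF$ is realised inside the $\ell$-fusion system of $\Gamma := \PSL_\ell(\overline{\FF_q}) = \varinjlim_m \PSL_\ell(\FF_{q^m})$, with a compatibly nested family of Sylow $\ell$-subgroups $S^{(m)} \le S$ satisfying $S = \varinjlim_m S^{(m)}$; here I would take $\cF_m$ to be the subsystem of the $\ell$-fusion system of $\PSL_\ell(\FF_{q^m})$ cut out by the analogues of conditions~(1)--(3), which is a finite saturated fusion system on $S^{(m)}$, and deduce from the uniqueness in Theorem~\ref{t:or2} that $\varinjlim_m \cF_m = \cF$. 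In the situation of Theorem~\ref{t:or1} I would construct the tower by hand: fix $x \in S \setminus T$ of order $\ell$ (\cite[Corollary~2.6]{OR19}), and for $n \ge 1$ set $T_n := \Omega_n(T)$, $S_n := T_n\langle x\rangle$, and $Q_n := Q \cap S_n$ for $Q \in \cH$ (if $t = -1$) or $Q \in \cB$ (if $t = 0$). Since $|[x,T]| = \ell$ in this case we have $[x,T] \le \Omega_1(T) \le T_n$, so each $S_n$ is a subgroup of $S$ with $S_n \unlhd S$, $S_1 \le S_2 \le \cdots$, $\bigcup_n S_n = S$, and likewise $T = \varinjlim_n T_n$ and $Q = \varinjlim_n Q_n$ with each $Q_n$ finite.

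\emph{The finite systems in the Theorem~\ref{t:or1} case.} Here $T_n$ is characteristic in $T$, hence preserved by $\Aut_\cF(T)$ and by $\Aut_\cF(S)$; by Lemma~\ref{l:outfp} each $\Aut_\cF(Q)$ preserves $Q_n$ (which equals $\Omega_n(\tilde Z)\times\tilde Q$, respectively its $\cB$-analogue) for every $n$; and since $\Out_\cF(S)$ is a finite $\ell'$-group while $\Inn(S)$ preserves each $S_n$, for all large $n$ every element of $\Aut_\cF(S)$ preserves $S_n$. For such $n$ I would let $\cF_n$ be the fusion system over $S_n$ generated, in the sense of Alperin's fusion theorem, by $\Aut_\cF(S)|_{S_n}$, $\Aut_\cF(T)|_{T_n}$ and the groups $\Aut_\cF(Q)|_{Q_n}$. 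The essential claim is that $\cF_n$ is saturated, with $\cF_n^{cr} = \{S_n, T_n\}\cup\{Q_n\}$ and with $\Out_{\cF_n}(S_n)$, $\Aut_{\cF_n}(T_n)$, $\Out_{\cF_n}(Q_n)$ equal to the corresponding finite groups of $\cF$. The hypothesis $v_\ell(|W|) = 1$ forces $S_n$, $T_n$ and $Q_n$ to be fully automised; one must in addition check receptivity and that no further subgroup of $S_n$ is $\cF_n$-centric radical. I would establish this either by producing a finite (a priori possibly exotic) fusion system realising the prescribed data on $S_n$, or by transporting the portion of \cite{OR19} that determines the centric radical subgroups from $S$ down to the finite $\ell$-group $S_n$.

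\emph{Passing to the limit, and the main difficulty.} In both cases $\cF = \varinjlim_n \cF_n$ then follows formally: by Alperin's fusion theorem for saturated fusion systems over discrete $\ell$-toral groups \cite{BLO3}, together with the lists of $\cF$-centric radical subgroups in Theorems~\ref{t:or2} and~\ref{t:or1}, every morphism of $\cF$ is a composite of restrictions of automorphisms lying in $\Aut_\cF(S)$, $\Aut_\cF(T)$ or some $\Aut_\cF(Q)$; such a composite involves only finitely many subgroups, all contained in a common $S_n$, and so already occurs in $\cF_n$. Since the inclusions $\cF_n \hookrightarrow \cF_{n+1} \hookrightarrow \cdots \hookrightarrow \cF$ are built into the construction (and $\cF$ is connected, so elements of $S \setminus T$ are $\cF$-fused into $T$ already within some $S_n$), this yields the claim. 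The step I expect to be the genuine obstacle is the saturation of $\cF_n$ in the Theorem~\ref{t:or1} case: without a uniform finite model one must pin down $\cF_n^{cr}$ directly --- in effect re-running, simultaneously for all large $n$, the part of \cite{OR19} that controls which subgroups of $S$ are centric and radical --- and then verify the saturation axioms for the resulting finite system.
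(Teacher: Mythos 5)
Your overall strategy coincides with the paper's. In the case where $T$ is not $\cF$-centric radical you use the $\PSL_\ell(\overline{\FF_q})$ model exactly as the paper does (the paper takes the cofinal subtower $\PSL_\ell(q^{\ell^{n-1}})$, but that is immaterial), and in the case where $T$ is $\cF$-centric radical you build the same finite tower: the paper takes $P_n$ to be the $\ell^n$-torsion subgroup of each $P\in\cF^{fcr}$, which agrees with your $T_n=\Omega_n(T)$, $S_n=T_n\langle x\rangle$, $Q_n=Q\cap S_n$, and defines $\cF_n$ by generation from the restricted automorphism groups, just as you do.

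The one place your argument is incomplete is precisely the one you yourself flag: proving that $\cF_n$ is saturated with $\cF_n^{cr}\subseteq\{S_n,T_n\}\cup\cH_n\cup\cB_n$. You offer two possible routes and leave both open. The paper closes the gap by citing \cite[Theorem~2.8]{COS17}: for a finite $\ell$-group with an abelian subgroup of index $\ell$, that theorem shows the system generated by the prescribed data $(\Aut_{\cF_n}(T_n),\ T_n,\ \Aut_{S_n}(T_n),\ \cF_n^{cr}\setminus\{S_n,T_n\}\subseteq\cH_n\cup\cB_n)$ is saturated and uniquely determined, and finiteness of $\Out_{\cF_n}(P_n)=\Out_\cF(P_n)$ comes from \cite[Proposition~2.3]{BLO3}. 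This is exactly your second proposed route (``transporting the portion of \cite{OR19} down to $S_n$''), since \cite{COS17} is the earlier installment of that series carrying out the finite analysis; with that citation in hand your argument would be complete and match the paper's. So: right construction, right diagnosis of the obstruction, missing only the specific reference that resolves it.
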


\begin{proof}
If $T$ is not $\cF$-centric radical then for any prime $q$ with $v_\ell(q-1)=1$ the $\ell$-fusion system of $\PSL_\ell(\overline{\mathbb{F}_q})$ is a direct limit $$\lim_{\substack{\longrightarrow \\ n \ge 1}} \cF_{S_n}(\PSL_\ell(q^{\ell^{n-1}})), \mbox{ where }  \displaystyle S = \lim_{\substack{\longrightarrow \\ n \ge 1}} S_n \mbox{ with } S_n \in \Syl_\ell(\PSL_\ell(q^{\ell^{n-1}})).$$ Setting $\cF_n$ to be the unique subsystem of $\cF_{S_n}(\PSL_\ell(q^{\ell^{n-1}}))$ with  $$\Aut_{\cF_n}(S_n)= C_{\ell-1}, \hspace{2mm} \Aut_{\cF_n}(T_n) \cong C_\ell \rtimes C_{\ell-1} \mbox{ and } \Aut_{\cF_n}(P_n) \cong \SL_2(\ell) \mbox{ for $P_n \in \cH$},$$ we realise $\cF$ as a direct limit of fusion systems.

Now suppose $T$ is $\cF$-centric radical, and for each $P \in \cF^{fcr}$ and $n \in \NN$, let $P_n$ be the $\ell^n$-torsion subgroup of $P$. Then $\displaystyle P = \lim_{\substack{\longrightarrow \\ n \ge 1}} P_n$ and we may define $$\cF_n:=\langle \{\varphi|_{P_n} \mid \varphi \in \Out_\cF(P) \} \mid P \in \cF^{fcr} \rangle.$$ By \cite[Proposition 2.3]{BLO3}, $\Out_{\cF_n}(P_n) = \Out_\cF(P_n)$ is finite. Moreover  $T_n$ is the unique abelian maximal subgroup of $S_n$ (since $\cF$ is connected). Now by \cite[Theorem 2.8]{COS17}, $\cF_n$ is a saturated fusion system on $S_n$ uniquely determined by $\Aut_{\cF_n}(T_n)$, $T_n$, $\Aut_{S_n}(T_n)$ and $\cF_n^{cr} \backslash \{S_n,T_n\} \subseteq \cH_n \cup \cB_n$ where $$\cH_n=\{Z(S_n)\langle x \rangle \mid x \in S_n \backslash T_n\} \mbox{ and } \cB_n=\{Z_2(S_n)\langle x \rangle \mid x \in S_n \backslash T_n\}.$$ Moreover $\cF_n \subseteq \cF_{n+1}$ for all $n \ge 1$ by construction. The result follows.
\end{proof}

\section{Proofs of Theorems \ref{t:main} and \ref{t:main2}}\label{s:main2}

Combining the results in Sections \ref{s:prelim} and \ref{s:main} we can now prove:

\begin{thm}
Let $\cF$ be a simple $\ell$-local compact group on $S$ with discrete torus $T$ of index $\ell$ and Weyl group $W=\Aut_\cF(T)$. Then $\cF$ is connected and $$\bw(\cF)=|\Irr(W)|.$$
\end{thm}

\begin{proof}
By the remarks following Theorem \ref{t:or2} we may assume that $T$ is $\cF$-centric radical. By Theorem \ref{t:or1}, $\cF$ is connected and either $\cF^{cr}=\{S,T\} \cup \cH$ or $\cF^{cr}=\{S,T\} \cup \cB$. Set $U=\Aut_S(T)$ for short. By Theorem \ref{t:or1} and Lemma \ref{l:thev}, for any representative $Q \in \cH$ or $Q \in \cB$ we have, $$\begin{array}{rcl} |\Irr(W)|-\bw(\cF) &=& |\Irr(W)|-(z(kW)+z(k\Out_\cF(Q))+z(k\Out_\cF(S)) \medskip\\ &=& |\Irr(N_W(U))|- |\Irr(\Out_\cF(S))|-z(k\Out_\cF(Q)). 
\end{array}$$

The descriptions of the groups $N_W(U), \Out_\cF(S)$ and $\Out_\cF(Q)$ in Theorem \ref{t:or1} combined with Lemma \ref{l:chars} then yield:

$$\begin{array}{rcl}
|\Irr(W)|-\bw(\cF) &=& \ell|\Irr(X)|-(\ell-1)|\Irr(X)|-z(k(\SL_2(\ell) X)) \medskip\\
&=& |\Irr(X)|-z(k(\SL_2(\ell) X)) \medskip\\ &=& 0,
 \end{array}$$ where $X$ is as in Lemma \ref{l:outfp}. The result follows from this.
\end{proof}

\begin{thm}
Let $\cF$ be a simple $\ell$-local compact group with $v_\ell(|W|)=1$ and write $\displaystyle\cF = \lim_{\substack{\longrightarrow \\ n \ge 1}} \cF_n$ for saturated fusion systems $\cF_n$ on $S_n \le S$ as described in Proposition \ref{p:approx}. For each $n$ we have, $$|\cM(\cF_n)| = |\cP(\cF_n)|.$$
\end{thm}

\begin{proof}
 Let $n \in \NN$ be fixed and set $T_n:=T \cap S_n$, $W=\Aut_\cF(T_n)=\Aut_{\cF_n}(T_n)$ and $U=\Aut_{S_n}(T_n) \le W$.  We have two bijections $$\cM(\cF_n) \longleftrightarrow \Irr(S_n^{ab}:\Out_\cF(S_n)) \longleftrightarrow \Irr(T_nN_W(U)),$$ the first from the discussion before Lemma \ref{l:goltmot} and the second from the isomorphism $S_n^{ab}:\Out_\cF(S_n) \cong T_nN_W(U)$ (implied by Theorems \ref{t:or2} and \ref{t:or1}). By Lemma \ref{l:molg} this latter set is in bijection with equivalence classes of pairs  $$\{(\psi,\chi) \mid \psi \in \Irr(T_n), \chi \in \Irr(C_{N_W(U)}(\psi)) \mid [S_n,S_n] \le \ker(\psi)\}/ \sim_{\cF_n}$$  where $(\psi, \chi) \sim_{\cF_n} (\psi',\chi')$ if there is $\rho \in N_W(U)$ with $\psi^\rho=\psi'$ and $\chi^{\rho^{-1}}=\chi'$.  By the Glauberman-Isaacs correspondence there exists a $N_W(U)$-equivariant bijection between $T_n$ and $\Irr(T_n)$ which maps $Z(S_n)=C_{T_n}(U)$ to $\{\psi \in  \Irr(T_n) \mid [S_n,S_n] \le \ker(\psi)\}$, and hence $\cM(\cF_n)$ is in bijection with
$$\{(s, \chi) \mid s \in Z(S_n), \chi \in \Irr(C_{N_W(U)}(s)) \} /\sim_{\cF_n}$$ where $(s,\chi) \sim_{\cF_n} (s',\chi')$ if and only if there is $\phi \in N_W(U)$ with $s'=\phi(s)$ and $\chi'=\chi^{\phi^{-1}}$. Now since every morphism between elements of $Z(S_n)$ extends to an $\cF_n$-automorphism of $S_n$ and $|\Irr(C_{N_W(U)}(s))|=|\Irr_0(C_{N_W(U)}(s))|=|\Irr_0(C_W(s))|$ for each $s \in Z(S_n)$ (by Lemma \ref{l:thev}), we have $|\cM(\cF_n)|=$ $$\sum_{s \in Z(S_n)/N_W(U)} |\Irr(C_{N_W(U)}(s))|  = \sum_{s \in Z(S_n)/\cF_n} |\Irr_0(C_W(s))| = |\cP(\cF_n)|, $$ as required.

\end{proof}

\end{document}